\numberwithin{equation}{section}
\newcommand{\be}{\begin{equation}}
\newcommand{\ee}{\end{equation}}
\newcommand{\benn}{\begin{equation*}}
\newcommand{\eenn}{\end{equation*}}
\newcommand{\bea}{\begin{eqnarray}}
\newcommand{\eea}{\end{eqnarray}}
\newcommand{\beann}{\begin{eqnarray*}}
\newcommand{\eeann}{\end{eqnarray*}}
\newtheorem{theorem}{Theorem}[section]
\newtheorem{corollary}[theorem]{Corollary}
\newtheorem{lemma}[theorem]{Lemma}
\newtheorem{definition}[theorem]{Definition}
\newtheorem{remark}[theorem]{Remark}
\newtheorem{assumptions}[theorem]{Assumption}
\newcommand{\qed}{\hfill $\Box$\smallskip}
\newcommand{\E}{\noindent{$\mathbb{E}$ \ }}
\DeclareMathOperator{\sgn}{sgn}
\def\R{\mathbb{R}}
\def\N{\mathbb{N}}
\def\P{\mathbb{P}}
\def\E{\mathbb{E}}
\def\P{\mathbb{P}}
\def\cF{\mathcal{F}}
\def\cL{\mathcal{L}}
\def\cN{\mathcal{N}}
\def\cO{\mathcal{O}}
\def\cS{\mathcal{S}}
\def\txtd{{\textnormal{d}}}
\def\txtD{{\textnormal{D}}}
\title{On the approximation of finite-time Lyapunov exponents for the stochastic Burgers equation}
\author{Alexandra Blessing Neam\c tu~\thanks{Alexandra Blessing Neam\c tu.  Department of Mathematics and Statistics, University of Konstanz,
Universit\"atsstra\ss{}e~10, 78464 Konstanz, Germany. E-Mail: alexandra.blessing@uni-konstanz.de}~~~~and~~~~Dirk Bl\"omker\thanks{Dirk Bl\"omker. Institut f\"ur Mathematik, Universit\"at Augsburg, Universit\"atsstra{\ss}e 12, 86135 Augsburg, Germany.~E-Mail: dirk.bloemker@math.uni-augsburg.de} }
\begin{document}
\maketitle

\begin{abstract}
We analyze stochastic partial differential equations (SPDEs) with quadratic nonlinearities close to a change of stability / phase-transition.~To this aim we compute finite-time Lyapunov exponents (FTLEs), observing a change of sign based on the interplay between the distance towards the bifurcation and the noise intensity. A technical challenge is to provide a suitable control of the nonlinear terms coupling the dominant and stable modes of the SPDE and of the corresponding linearization.~In order to illustrate our results we apply them to the stochastic Burgers equation.

\end{abstract}

{\bf Keywords:} phase transitions, finite-time Lyapunov exponents, Burgers equation.\\
{\bf MSC (2020):} 60H15, 60H10, 37H15, 37H20.
\section{Introduction}

Finite-time Lyapunov exponents are a very useful tool to detect local stability/instability of a stochastic system.~Negative finite-time Lyapunov exponents indicate attraction whereas positive ones indicate that nearby solutions tend to separate on a finite time horizon.~These have been investigated for SDEs~\cite{AK:84, CDLR17,DELR:18,BBBE:25} and SPDEs with cubic nonlinearities such as Allen-Cahn or Swift-Hohenberg in~\cite{BlEnNe:21,BlNe:23,BB:25,B:25}.
Quadratic nonlinearities are significantly more difficult, as cubic nonlinearities often add additional stability and dissipation to the system.~To our best knowledge, this is the first work that investigates finite-time Lyapunov exponents (FTLE) for SPDEs with quadratic nonlinearities, such as the stochastic Burgers equation where the tools developed in \cite{BlEnNe:21,BlNe:23,BB:25} break down.~The quadratic nonlinearity introduces a coupling between the dominant and stable modes of the SPDE and its linearization which does not occur for stable cubic nonlinearities.~Therefore removing the stable modes using averaging techniques and getting closed equations for the dominant modes is challenging and requires a suitable It\^o trick together with a careful analysis of the coupling.   \\

In order to study a bifurcation via Lyapunov exponents, we consider equations close to a change of stability, where the theory of amplitude equations (AE) allows to reduce the dynamics to an SDE on the dominating modes.~This approximation is a well established tool and many results were published on this topic such as~\cite{BlHa:04, Bl:07, BHP, BlMo, Fu}.~Most of these results use an approximation over a long but  finite time-scale, which is the natural slow time-scale on which the interesting dominating dynamics evolve.~There are also results available for approximation of invariant measures via AE, see for example~\cite{BlHa:04}.~Nevertheless, an approximation of Lyapunov-exponents, which takes into account the limit of time to infinity, seems to be out of reach at the moment.~Therefore we focus on FTLE, since a change of sign in the FTLEs  detects a transition which is not the case for asymptotic Lyapunov exponents~\cite{CDLR17}. \\

In an informal way our main result, Theorem~\ref{scale:ftle} approximates the FTLEs for the full SPDE via the FTLE of the AE.
In contrast to our previous work \cite{BlNe:23} we present a fairly general approximation result in Theorem \ref{thm:lyapappr} that reduces the error term  between the finite-time Lyapunov exponents of the SPDE and the amplitude SDE to questions of stability.~We believe that this abstract result can be very useful for numerical purposes.~We plan to address this aspect in a future work for the stochastic Burgers equation exploiting the techniques in~\cite{BlJ}. 
In this work we need precise error analysis of the approximation for amplitude equations (see Theorem \ref{thm:approximation}) and the corresponding linearizations (see Theorem \ref{thm:linear}).~This allows us to state implications on the bifurcation (positive or negative finite time Lyapunov exponent) once we know the FTLE for the AE. While our main approximation result is fairly general, we also need results on FTLEs for AE.~Let us remark that for 1D SDEs, there are many results for FTLE especially with additive noise~\cite{CDLR17,B:25}.
The key point is that the solution of the linearized equation is usually given as an explicit exponential, which allows for direct computations.~For FTLEs in 2D, we plan to use the techniques in~\cite{DELR:18,BBBE:25} and also to exploit symmetries which might help to further reduce the dimension.\\

Let us state something about the choice of the noise, i.e.~it is essential that the noise is additive for the analysis, while the approximation via amplitude equations hold in general also for multiplicative noise~\cite{Fu}.~However, the framework for FTLEs is well-established for additive noise, while results for SDEs with multiplicative noise are known, see e.g.~\cite{AK:84} for an example on compact manifolds,
the general theory for FTLEs for SDEs with multiplicative noise seems to be open to our knowledge.~Nevertheless, it is important to have small noise while we are close to a change of stability, in order for the approximation via AE to work. Otherwise, a too strong noise will spoil the features of the bifurcation that we want to exploit.~But let us remark that it is not essential to force the dominant modes at all. In that case our the approximation via AE still works and the AE is deterministic.~Nevertheless, if the dominant modes are unforced, one can allow for larger noise than we do in our result here and still obtain an approximation result for AE. See for example Bl\"omker, Hairer, Pavliotis~\cite{BHP}, or Mohammed, Klepel, B\"omker \cite{MBKa}, where additional terms appear in the AE, in particular multiplicative noise.~This complicates the analysis of the FTLEs significantly.\\

In our examples, we mainly focus on a Burgers-type equation with a one-dimensional kernel, but this is mainly due to the availability of results on FTLE for 1D SDEs.~We can also treat equations like 
Navier-Stokes, although there is no bifurcation, and no reduction to a dominant space $\cN$.~Here we could focus on the onset of convection in Rayleigh-B\'enard, which consists of a Navier-Stokes equation coupled to a heat equation.~The operator in this model is not self-adjoint and therefore we have to 
modify our approach, but this does not seem to be out of reach and will be treated in a future work, see Subsection \ref{sec:rb} for a brief discussion.~Moreover, models like Kuramoto-Sivashinsky or KPZ-type equations would fall into the scope of our result.~Another example is a surface growth model in~\cite{BlMo} with periodic boundary conditions where the kernel is two-dimensional, here in some cases we might exploit symmetries like polar coordinates to reduce the AE from 2D to 1D. The combination of cubic and quadratic nonlinearities in the model are also possible. Moreover higher order terms in the nonlinearity of the SPDE would not affect the approximation via AE too much.~In order to focus on the main features and challenges posed by the quadratic nonlinearity, and not get overwhelmed by technical details  we treat only quadratic nonlinearities here in this paper.  \\

While we work close to the bifurcation let us comment on the stable case below the threshold of stability.~For cubic nonlinearities due to additional dissipation, 
below the threshold one can show that FTLE exponents are negative once we are in the stable regime.~For Burgers or Navier-Stokes this is quite different, here the linear part has to provide sufficient dissipation, in order to verify that FTLEs are actually negative, as the linear part has to dominate the nonlinearity in the estimates.~This is an old well-known problem, already present in results on random attractors or synchronization for Navier-Stokes, for sufficiently strong dissipation the attractor is a singleton.~Using a similar analysis one can show that FTLE are negative in that case, but we do not comment in detail on this straightforward argument.~A more recent example is the work by Gess, Liu, Schenke~\cite{GLS:20} which applies also for the existence of a random attractor for the stochastic Burgers equation for large enough linear dissipativity.  \\

In a forthcoming work we plan to investigate other noise such as fractional Brownian motion based on the approximation results developed in~\cite{BlNe:22} and FTLEs in~\cite{BB:25} for SPDEs with stable cubic nonlinearities.~Note that the setting of this work is not restricted to Markovian noise.~However, we rely on an It\^o formula to eliminate certain terms of higher order.~Nevertheless, we believe that this can be replaced by other methods.~Furthermore, we also plan to extend our results for the rough Burgers equation with multiplicative noise exploring the rough path approach developed by~\cite{HaWe}. 

\paragraph{Plan of the paper} This work is structured as follows. In Section~\ref{sec:a} we introduce the setting and state the assumptions on the coefficients of the SPDE we consider. In Section~\ref{sec:approx} we state the approximation of the SPDE with an SDE called amplitude equation.~A technical tool is to combine a multiscale approach with a suitable It\^o trick in order to eliminate certain terms of higher order and to get a closed formula for the SDE describing the essential dynamics of the infinite-dimensional system. In order to compute FTLEs, we have to consider in Section~\ref{sec:main} the linearization of the SPDE.~A main technical challenge is to control large terms appearing in this linearization.~This effect is due to the quadratic nature of the nonlinearity and does not occur for cubic nonlinearities as treated in~\cite{BlNe:23}.~Therefore we have to develop different tools combining a multiscale approach with stopping time arguments and It\^o's formula
to rigorously handle such terms.~Thereafter we state our main results, Theorems~\ref{thm:lyapappr} and Theorem~\ref{scale:ftle}.~These provide precise error bounds for the difference between the finite-time Lyapunov exponents of the SPDE and those of the SDE. These are independent of the structure of the SDE and therefore can be used in different situations.~Based on these bounds, we provide a bifurcation analysis in Section~\ref{sec:bif}.~To this aim, we explore the fact that the drift of the SDE is given by a stable cubic nonlinearity for which we can use the previous results in~\cite{BlNe:23}.~We apply this setting  to the stochastic Burgers equation in Section~\ref{sec:app}.

\subsection*{Acknowledgments}
A. Blessing acknowledges funding by the DFG CRC/TRR 388 "Rough Analysis, Stochastic Dynamics and Related Fields" - Project ID 516748464 and DFG CRC 1432 " Fluctuations and Nonlinearities in Classical and Quantum Matter beyond Equilibrium" - Project ID 425217212.

\section*{Conflict of interest statement}
The authors have no conflicts of interest to declare. All co-authors have seen and agree with the contents of the manuscript and there is no financial interest to report. 
\section*{Data availability}
No data was used for the research described in the article.
\section*{Ethics declaration}
Not applicable. 

\section{Setting and Assumptions}\label{sec:a}

We let $X$ stand for a separable Banach space and consider the SPDE driven by a cylindrical Brownian motion $(W(t))_{t\geq0}$ 
\begin{align}\label{spde}
\begin{cases}
\txtd u = [A u + \nu u +B(u,u)]~\txtd t +\sigma \txtd W(t) \\
u(0)=u_{0}\in X.
\end{cases}
\end{align}

\begin{definition}\label{order}
For the $\cO$-notation here we use that an $X$-valued process $M$ is $\cO(f)$ for a term $f$ on a possibly random interval $I$, if for all probabilities $p\in(0,1)$ there is a constant $C_p>0$ such that $\P (\sup_{t\in I}\|M(t)\|_X \leq C_p f) \geq p$. For time independent quantities we use the similar notation without the supremum in time. 
If the process $M$ and the bound $f$ depends on some small quantity  $\varepsilon>0$ we assume that the constant $C_p$ is independent of $\varepsilon \in (0,\varepsilon_0]$ for some fixed $\varepsilon_0>0$. We also use the abbreviation that a process $M_\varepsilon$ is  $\cO(\varepsilon^{n-})$  if 
for all $\kappa>0$ we have $M_\varepsilon=\cO(\varepsilon^{n-\kappa})$.
\end{definition}

We make the following standard assumptions on the linear operator $A$, on the quadratic non-linearity $B$ and on the noise. 
\begin{assumptions}\label{a} (Differential operator $A$)
The linear operator $A$ generates a compact analytic semigroup $(e^{tA})_{t\geq 0}$ on $X$. 
 Moreover, it is symmetric, non-positive and has a finite-dimensional kernel  which we denote by $\cN$. 
 We define  the projection  $P_c$ onto $\cN$, set $P_s=\text{Id}-P_c$ and obtain that $X=\cN\oplus \cS$, where $\cS$ stands for the range of $P_s$. 
 The semigroup is exponentially stable on $\cS$ which means that there exists $\mu>0$ such that 
\[ 
\|e^{tA} P_s\|_{\cL(X)}\leq e^{-t\mu},~~\text{ for all } t\geq 0. 
\]
We further assume that there exists another Hilbert space $Y$ such that $Y\hookrightarrow X$ and that we can extend the semigroup $(e^{tA})_{t\geq 0}$ to a semigroup on $Y$. Moreover, we assume that for every $t>0$, $e^{tA}$ is a bounded operator from $Y$ to $X$ such that for some $\alpha\in[0,1)$ we have
\[ \|P_s e^{tA}y\|_X\leq M (1+t^{-\alpha}) \|y\|_Y, \text{ for all } t>0 \text{ and }y\in Y.  \]

\end{assumptions}


\begin{assumptions}\label{b}(Nonlinearity)
We assume that $B:X\times X\to Y$ is a bilinear continuous symmetric map, i.e. $B(u,v)=B(v,u)$ and there exists a constant $C_B>0$ such that
\[ \|B(u,v)\|_Y \leq C_B \|u\|_X \|v\|_X. 
\]
We set $B(u):=B(u,u)$, $B_s=P_s B$, $B_c=P_cB$ and assume that $B_c(\cdot,\cdot)=0$ on $\cN\times\cN$.~We further require that 
\[\cF_c(a):=-P_cB(a,A_s^{-1}B_s(a,a))= -B_c(a,A^{-1}_sB_s(a,a))\] is a stable cubic meaning that $\langle \cF_c(a),a\rangle \leq0$.
Moreover, we assume that for any positive $\delta>0$ there is a constant $C>0$ depending on $\delta$ such that for all $a, b\in \cN$ 
\begin{equation}\label{sign}
\langle\cF_c(a+b),a\rangle 
\leq -c\|a+b\|^4_{\cN} + C\|a+b\|^3_\cN\|b\|_\cN
\leq - \delta\|a\|^4_\cN + C_\delta \|b\|^4_\cN.
\end{equation}
We additionally assume that $\langle D \cF_c(a)b, b \rangle\leq 0$. 
\end{assumptions}

\begin{assumptions}\label{n}(Noise)
 We assume that $(W(t))_{t\geq 0}$ is a cylindrical Wiener process on a probability space $(\Omega,\mathcal{F},\mathbb{{P}})$ with covariance operator $Q\in \cL(X)$. Moreover, $W_c:=P_cW$ is a Wiener process on $\cN$ with covariance operator $P_cQ P^*_c$ and $W_s:=P_sW$ is a Wiener process on $\cS$ with covariance operator $P_s Q P^*_s$. We further assume that $W_c$ and $W_s$ are independent and that $A^{-1}_s W_s$ is a Wiener process in $X$ with covariance operator $A_s^{-1}P_s Q P_s^* (A_s^{-1})^*$. Furthermore, for an arbitrary basis $(e_k)_{k\in\N}$ of $X$ we assume that
\begin{align}\label{sum:c}
    \sum\limits_{k=1}^\infty \|Q^{1/2}_c e_k\|^2_X<\infty
\end{align}
respectively
\begin{align}\label{sum:s}
   \sum\limits_{k=1}^\infty B_c (P_c Q^{1/2}e_k, A^{-1}_sP_s Q^{1/2} e_k)<\infty.   
\end{align}




\end{assumptions}

\begin{assumptions}\label{o:z}
The stochastic convolution
\[ Z(t):=\int_0^t e^{A(t-s)}~\txtd W(s)\]
is well-defined and has $\P$-a.s.~continuous trajectories in $X$. We further have for every small $\kappa>0$
\begin{align}\label{scaling:z}Z_s(T):=P_s Z(T) = \cO(T^{\kappa})\qquad\text{and}\qquad 
Z_c(T):=P_c Z(T) = P_c W(T) = \cO( T^{1/2})
\end{align}
on any $[0,T]$ with $T>0$ in the space $X$. The fact that $P_sZ(T)=\cO(T^\kappa)$ follows by the well-known factorization method~\cite{DaPZ:92}.  
 By Chebyshev's inequality, one obtains for $p\geq 1$ the existence of a constant $C_p>0$ such that 
    \[ \P(\sup\limits_{t\in[0,T_0]} \|Z_s(t)\|_X \geq \delta )\leq C_p \delta^{- p } T_0.    \]
    This probability can be made arbitrarily small taking $p$ large enough.~For more details, see~\cite[Remark B.1]{BlNe:22}. 
\end{assumptions}

%


\subsection{Finite-time Lyapunov-exponents}\label{sec:ftle}

The linearization $\txtD_{u_0} u(t,\omega,u_0)$ of~\eqref{spde} around a solution $u(t,\omega,u_0)$ with initial condition $u_0$ is defined as the solution $v(t,\omega,u_0,v_0)$ of the linear PDE called also the variation equation, which due to the additive structure of the noise and the quadratic nonlinear term is given by
\begin{align}\label{linearization}
\begin{cases}
    \txtd v =  [Av + \nu v + 2B(u, v)]~\txtd t \\
    v(0)=v_0.
    \end{cases}
\end{align}
\begin{remark}
The Fr\'echet differentiability of the solution operator $u_0 \ni X \mapsto u(t,\omega,u_0)\in X$ follows subtracting the stochastic convolution $Z$ from the SPDE~\eqref{spde}.~Therefore we obtain a random PDE to which we can apply pathwise deterministic regularity results~\cite[Theorem 3.4.4]{Henry81}.
\end{remark}

For $t>0$ we denote the random solution operator $U_{u_0}(t):X\to X$ such that $v(t)=U_{u_0}(t)v_0$, where $v$ is a solution of \eqref{linearization} given the initial condition $v_0\in X$ and define finite-time Lyapunov exponents as in~\cite{BlEnNe:21}.

\begin{definition}{\em (Finite-time Lyapunov exponent)}. Let $t>0$ be fixed.~We call a finite-time Lyapunov exponent for a solution $u$ of the SPDE with  initial condition $u_0=u_0(\omega)$ 
\begin{equation}\label{ftle}
    \lambda_t(u_0):=\lambda(t,\omega,u_0(\omega)) =\frac{1}{t} \ln \left( \| U_{u_0}(t) \|_{\cL(X)}\right).
\end{equation}
\end{definition}

Based on this we compute the FTLEs as in~\cite{BlNe:23} as follows. 
\begin{remark}
\label{propLyap}
We can compute $\|U_{u_0}\|_{\cL(X)}$ as follows
\begin{eqnarray*}
\| U_{u_0}(t) \|_{L(X)}
&=& \sup\{ \|v(t)\| / \|v(0)\| \ : \  v \text{ solves \eqref{linearization} with }v(0)\not=0 \} \\
&=& \sup\{ \|v(t)\|  \ : \  v \text{ solves \eqref{linearization} with }\|v(0)\|=1 \}.
\end{eqnarray*}
\end{remark}

\section{Approximation with amplitude equations}\label{sec:approx}
Let us first comment on the concept of solutions. Under our assumptions, it is straightforward and well known (see \cite{BlMo} for example in a similar setting) to show the existence of local solutions, which is sufficient for our results. 

\begin{definition}
    A maximal local solution of~\eqref{spde} is an $X$-valued continuous process $u$ defined for times up to stopping time $\tau^\text{ex}$  satisfying the variation of constants formula for $t\in [0,\tau^\text{ex})$
    \begin{align}\label{mild:sol}
        u(t)= e^{tA} u_0 + \nu \int_0^t e^{A(t-s)}u(s)~\txtd s + \int_0^t e^{A(t-s)}B(u(s),u(s))~\txtd s + \sigma Z(t),~~t>0, 
    \end{align}
    such that with probability one either $\tau^\text{ex}=\infty$ or $u(t) \to \infty $  for $t\nearrow \tau^\text{ex}$ in $X$.
\end{definition} 

Close to a change of stability, we reduce the infinite dimensional dynamics of the SPDE~\eqref{spde} to an SDE.~To this aim we fix the following setting. 

\begin{assumptions}{(Approximation)}
We make the following assumptions:
    \begin{itemize}
        \item initial condition: $P_cu(0) =\cO(\varepsilon)$ and $P_s u(0)=\cO(\varepsilon^2)$. 
        \item the parameter $\nu$ indicating the distance to the bifurcation and the noise intensity $\sigma$ satisfy \begin{align}
    \sigma \varepsilon^{-2} \leq C, \quad \nu \varepsilon^{-2}\leq C,
\end{align}
for an arbitrary constant $C>0$.
    \end{itemize}
\end{assumptions}

{\bf Ansatz.} For $U_c \in\cN$ and $ U_s\in\cS$ we make the ansatz $U=U_c+\varepsilon U_s$ and obtain on the slow time scale $T=\varepsilon^2 t $
\[ u(t) =\varepsilon U(\varepsilon^2t)= \varepsilon U_c(\varepsilon^2t) + \varepsilon^2 U_s (\varepsilon^2t). 
\]
Therefore we get that
\begin{align}
&\txtd U_c = [\nu\varepsilon^{-2} U_c +\varepsilon^{-1} B_c(U_c+\varepsilon U_s)]~\txtd T + \varepsilon^{-2}\sigma \txtd  \tilde{W}_c(T)\label{Uc}\\
&\txtd U_s = \varepsilon^{-2}[A_s U_s + \nu U_s + B_s( U_c+\varepsilon U_s)]~\txtd T +\sigma\varepsilon^{-3} \txtd \tilde{W}_s(T), \label{Us}
\end{align}
where $(\tilde{W}_c(T))_{T\in[0,T_0]}$ and $(\tilde{W}_s(T))_{T\in[0,T_0]}$ are rescaled Brownian motions for a fixed $T_0$.~The first main goal is to show that we can remove  $B_c(U_c,U_s)$ from the equation~\eqref{Uc} and show that~\eqref{Uc} approximates the dynamics of the SPDE~\eqref{spde} up to a small error  term.\\

To this aim, we first let the radius $r_c >0$ be large and the exponent $\kappa$ arbitrarily small, and introduce the stopping time 
\begin{align}\label{stopping:time}
   \tau^\star:=\inf\{T\in[0,T_0] : \|U_c(T)\|_X \geq r_c, \|U_s(T)\|_X \geq \varepsilon^{-\kappa}\}.  
\end{align}



\begin{remark}\label{rem:ou} Due to Assumption~\ref{o:z} the rescaled Ornstein-Uhlenbeck process $\tilde{Z}_\varepsilon$ 
defined as follows satisfies 
    \begin{align}\label{rescaled:ou}
\tilde{Z}_\varepsilon(T):= P_s\int_0^T e^{\varepsilon^{-2}A(T-S)}~\txtd \tilde{W}(S)= \varepsilon\int_0^{T\varepsilon^{-2}} e^{A(\varepsilon^{-2}T-s)}~\txtd {W}(s) = \varepsilon Z_s(T\varepsilon^{-2}) =\cO(\varepsilon^{1-}),
\end{align}
for a rescaled Brownian motion $(\tilde{W}(T))_{T\geq 0}$ with $\tilde{W}(T) = \varepsilon W(T\varepsilon^{-2})$. %
\end{remark}

The next result provides an upper bound on $U_s$ in terms of $r_c$ up to the stopping time $\tau^*$.
\begin{lemma}\label{us:mild} Let $\varepsilon_0>0$ be fixed and $P_su_0 =\cO(\varepsilon^{2})$ for $\varepsilon\in(0,\varepsilon_0]$. Then $\|U_s\|_X=\cO(\varepsilon^{0-}) $ on $[0,\tau^\star]$. 

\end{lemma}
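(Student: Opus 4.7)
The plan is to work with the mild formulation of the slow-time equation \eqref{Us},
\begin{equation*}
U_s(T) = e^{\varepsilon^{-2}A_s T}U_s(0) + \varepsilon^{-2}\!\int_0^T\! e^{\varepsilon^{-2}A_s(T-S)}\bigl[\nu U_s(S) + B_s(U_c(S)+\varepsilon U_s(S))\bigr]~\txtd S + \sigma\varepsilon^{-3}\tilde Z_\varepsilon(T),
\end{equation*}
and to estimate the four contributions separately on $[0,\tau^\star]$, where the a priori bounds $\|U_c\|_X \leq r_c$ and $\|U_s\|_X \leq \varepsilon^{-\kappa}$ are available by definition of $\tau^\star$.

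Three of the terms are handled directly. Since $P_s u_0 = \cO(\varepsilon^2)$, one has $U_s(0) = \varepsilon^{-2}P_s u_0 = \cO(1)$, so that exponential stability of $e^{tA_s}$ on $X$ gives $\|e^{\varepsilon^{-2}A_s T}U_s(0)\|_X \leq e^{-\mu\varepsilon^{-2}T}\cO(1) = \cO(1)$. The noise term satisfies $\sigma\varepsilon^{-3}\tilde Z_\varepsilon(T) = \cO(\varepsilon^{0-})$ by Remark \ref{rem:ou} together with $\sigma \leq C\varepsilon^2$. For the linear drift, $\nu\varepsilon^{-2} \leq C$, and the exponentially decaying convolution kernel yields
\begin{equation*}
\Bigl\|\nu\varepsilon^{-2}\!\int_0^T\! e^{\varepsilon^{-2}A_s(T-S)}U_s(S)~\txtd S\Bigr\|_X \leq C\varepsilon^{-\kappa}\!\int_0^T\! e^{-\mu\varepsilon^{-2}(T-S)}~\txtd S = \cO(\varepsilon^{2-\kappa}).
\end{equation*}

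The main obstacle is the nonlinear convolution, because $B_s$ takes values in the auxiliary space $Y$, which forces the use of the smoothing estimate $\|P_s e^{tA}y\|_X \leq M(1+t^{-\alpha})\|y\|_Y$ from Assumption \ref{a} rather than a direct $X$-contraction. On $[0,\tau^\star]$, Assumption \ref{b} gives $\|B_s(U_c+\varepsilon U_s)\|_Y \leq C_B(r_c+\varepsilon^{1-\kappa})^2$, which is bounded in terms of $r_c$ for $\varepsilon$ small. Combining the smoothing with the exponential stability on $X$ via the factorization $P_s e^{tA} = P_s e^{(t-1)A}P_s e^{A}$ for $t\geq 1$ yields an enhanced bound $\|P_s e^{tA}\|_{\cL(Y,X)} \leq M'(1+t^{-\alpha})e^{-\mu' t}$. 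Substituting and rescaling via $\sigma = \varepsilon^{-2}(T-S)$, the prefactor $\varepsilon^{-2}$ is absorbed by the Jacobian $\varepsilon^2$, and the nonlinear contribution is bounded by $C_{r_c}\int_0^\infty (1+\sigma^{-\alpha})e^{-\mu'\sigma}~\txtd\sigma$, a finite constant independent of $\varepsilon$ and $T$ since $\alpha < 1$. Summing the four contributions gives $\|U_s\|_X = \cO(\varepsilon^{0-})$ on $[0,\tau^\star]$.
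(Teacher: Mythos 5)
Your proof is correct and follows essentially the same route as the paper: the mild formulation \eqref{us:variation}, the a priori bounds from the stopping time $\tau^\star$, the smoothing estimate of Assumption \ref{a} for the nonlinear convolution, and the rescaling of the integration variable to absorb the $\varepsilon^{-2}$ prefactor. The only differences are cosmetic — you spell out the factorization combining the $Y$-to-$X$ smoothing with the exponential decay, which the paper uses implicitly, and you should avoid re-using $\sigma$ as a substitution variable since it already denotes the noise intensity.
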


\begin{proof}
We use the mild formulation 
\begin{align}\label{us:variation}
U_s(T) &= e^{\varepsilon^{-2}TA_s} U_s(0) + \varepsilon^{-2}\int_0^T e^{\varepsilon^{-2}(T-S)A_s}  [\nu U_s(S) + B_s(U_c(S)+\varepsilon U_s(S))]~ \txtd S\\
&+ \sigma\varepsilon^{-3} \tilde{Z}_\varepsilon (T). \nonumber
\end{align}
Thus, using Assumption~\ref{a}, Remark~\ref{rem:ou} and that $P_su_0=\cO(\varepsilon^2)$ consequently $U_s(0)=\cO(1)$, we obtain for $T\in [0,\tau^\star]$ that
\begin{align*}
    \|U_s(T)\|_X &\leq C \|U_s(0)\|_X  +C\varepsilon^{-2}\nu \int_0^T e^{-(T-S)\varepsilon^{-2}\mu}\varepsilon^{-\kappa}~\txtd S \\ &+ C \varepsilon^{2\alpha-2} \int_0^T (T-S)^{-\alpha} e^{-(T-S)\varepsilon^{-2}\mu} (r_c+\varepsilon^{1-\kappa})^2 \txtd S + \sigma \varepsilon^{-3} \|\tilde{Z}_\varepsilon(T)\|_X\\
    &=\cO(\varepsilon^{0-}) \quad \text{on } [0,\tau^\star].
\end{align*}
Taking $\varepsilon_0$ small proves the assertion keeping in mind that $\nu\varepsilon^{-2}\leq C$, $\sigma \varepsilon^{-2}\leq C$, $\|\tilde{Z}_\varepsilon(T)\|_X=\cO(\varepsilon^{1-})$.
\end{proof}

\paragraph{Heuristic sketch of the approach.}
In order to remove the term $B_c(U_c,U_s)$  from the equation~\eqref{Uc} we apply It\^o's formula to $B_c(U_c,A_s^{-1}U_s)$. We will show that 
\[
\int_0^T P_cB(U_c(S),U_s(S))~\txtd S
\approx 
-\int_0^T P_cB(U_c(S),A_s^{-1}B_s(U_c(S),U_c(S))~\txtd S
 \]
 up to an error term of order $\cO(\varepsilon)$. 
 The ansatz entails on $[0,\tau^\star]$ since $\varepsilon^{-2}\nu =\cO(1)$ and $\varepsilon^{-2}\sigma=\cO(1)$ that
 \begin{align}
 \txtd U_c(T) &= \cO(r_c+{\varepsilon^{-1}}(r_c+\varepsilon^{1-\kappa})^2) \txtd T +  \sigma\varepsilon^{-2} \txtd\tilde{W}_c(T) \nonumber \\
 &= \cO(r_c+{\varepsilon^{-1}}(r_c+\varepsilon^{1-\kappa})^2) \txtd T + \cO(1) \txtd\tilde{W}_c(T)
 \end{align}
 respectively
 \begin{align}\label{ito:us}
     \txtd U_s(T) &= \varepsilon^{-2}[A_sU_s +B_s(U_c)] \txtd T  + \cO(\varepsilon^{-1-\kappa}r_c+ \varepsilon^{-2\kappa}) \txtd T
 + \sigma \varepsilon^{-3} \txtd\tilde{W}_s(T) \nonumber\\ 
     &= \varepsilon^{-2}[A_sU_s +B_s(U_c)] \txtd T  + \cO(\varepsilon^{-1-\kappa}r_c+ \varepsilon^{-2\kappa}) \txtd T
 + \cO(\varepsilon^{-1}) \txtd\tilde{W}_s(T).
 \end{align}
  Thus It\^o's formula on $[0,\tau^*]$  entails
\begin{align}
    \txtd B_c(U_c,A_s^{-1}U_s)  
&= B_c(\txtd U_c,A_s^{-1} U_s) + B_c(U_c,A_s^{-1}\txtd U_s) + \sigma^2 \varepsilon^{-5}B_c(\txtd \tilde{W}_c,A_s^{-1}\txtd \tilde{W}_s) \nonumber\\ 
& = B_c( [\nu\varepsilon^{-2} U_c +\varepsilon^{-1}(B_c(U_c+\varepsilon U_s))]~\txtd T , A^{-1}_sU_s) + \varepsilon^{-2}\sigma B_c(\txtd \tilde{W}_c(T), A^{-1}_sU_s)\nonumber \\
& + B_c(U_c, A^{-1}_s [\varepsilon^{-2}A_sU_s+\nu U_s + B_c(U_c+\varepsilon U_s) ]~\txtd T) + \sigma\varepsilon^{-3} B_c(U_s,A^{-1}_s~\txtd\tilde{W}_s(T)) \nonumber\\
& = \cO(\varepsilon^{-1-\kappa})\txtd T+\cO(\varepsilon^{-\kappa})\txtd\tilde{W}_c + \cO(\varepsilon^{-\kappa}) \txtd\tilde{W}_s + \varepsilon^{-2} B_c(U_c,A_s^{-1}[A_sU_s+B_s(U_c)] \txtd T).\label{ito:approx}
\end{align}
Recalling that $W_c$ and $W_s$ are independent we therefore have that $B_c(\txtd \tilde{W}_c,A_s^{-1}\txtd \tilde{W}_s)=0$.
Since $B:X\times X\to Y$ and $A^{-1}_s$ is a bounded operator on $X$, all estimates of the terms above on $[0,\tau^*]$ are straightforward, see~\cite[Lemma 24]{BlMo} for computations in a similar setting. 

In conclusion collecting the terms of order $\cO(\varepsilon^{-2})$ we infer on  $[0,\tau^\star]$
\[ \int_0^T B_c(U_c, U_s) \txtd S= - \int_0^T B_c(U_c,A_s^{-1}B_s(U_c)) \txtd S + R_2(T),\]
where we will later show that $R_2$ is of order $\cO(\varepsilon^{1-\kappa})$ on $[0,\tau^*]$. Obviously, this term also depends on $r_c$.

\begin{lemma}{\em(Stochastic integrals)}\label{s:integrals}
Under the Assumption~\ref{n}, in particular~\eqref{sum:c} and~\eqref{sum:s}, the integrals 
\[ \int_0^T B_c(\txtd \tilde{W}_c(S), A^{-1}_s U_s(S)) \text{ and } \int_0^T B_c(U(S),A^{-1}_s~\txtd \tilde{W}_s(S))  \]
are well-defined and of order $\cO(\varepsilon^{-\kappa})$ respectively $\cO(r_c)$.
\end{lemma}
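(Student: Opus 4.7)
My strategy is to expand both stochastic integrals in terms of the covariance eigenstructure of the Wiener processes, verify the Hilbert--Schmidt condition needed to make each Itô integral meaningful, and then apply the Burkholder--Davis--Gundy inequality combined with the pathwise bounds available on the stopped interval $[0,\tau^\star]$ from \eqref{stopping:time} and Lemma~\ref{us:mild}.

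For the first integral, I would fix an orthonormal basis $(e_k)_{k\in\N}$ of $X$ diagonalising things appropriately, write the cylindrical process $\tilde W_c$ as $\sum_k Q_c^{1/2}e_k\,\beta_k^c$ for independent scalar Brownian motions $\beta_k^c$, and represent
\[
\int_0^T B_c(\txtd\tilde W_c(S),A_s^{-1}U_s(S))
=\sum_k \int_0^T B_c\bigl(Q_c^{1/2}e_k, A_s^{-1}U_s(S)\bigr)\,\txtd\beta_k^c(S).
\]
Since $\cN$ is finite-dimensional, the $Y$- and $\cN$-norms are equivalent on $\cN$, so the bilinear estimate from Assumption~\ref{b} yields
\[
\sum_k \bigl\|B_c\bigl(Q_c^{1/2}e_k,A_s^{-1}U_s(S)\bigr)\bigr\|_\cN^2
\leq C \|A_s^{-1}\|_{\cL(X)}^2 \|U_s(S)\|_X^2 \sum_k\|Q_c^{1/2}e_k\|_X^2,
\]
which is finite on $[0,\tau^\star]$ thanks to~\eqref{sum:c} and $\|U_s\|_X\leq \varepsilon^{-\kappa}$. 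Applying BDG to the $\cN$-valued martingale and stopping at $\tau^\star$ gives
\[
\E\Bigl[\sup_{T\in[0,T_0\wedge\tau^\star]}\|I_1(T)\|_\cN^2\Bigr] \leq C T_0\,\varepsilon^{-2\kappa},
\]
and a Chebyshev argument produces the high-probability bound $I_1=\cO(\varepsilon^{-\kappa})$ required by Definition~\ref{order}.

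For the second integral I proceed analogously by expanding
\[
\int_0^T B_c(U(S),A_s^{-1}\txtd\tilde W_s(S))
=\sum_k \int_0^T B_c\bigl(U(S), A_s^{-1}P_sQ^{1/2}e_k\bigr)\,\txtd\beta_k^s(S).
\]
Here I exploit that, by Assumption~\ref{n}, $A_s^{-1}W_s$ is a genuine Wiener process in $X$, so $\sum_k\|A_s^{-1}P_sQ^{1/2}e_k\|_X^2<\infty$; condition~\eqref{sum:s} enters to control the relevant cross-trace when one bounds the Hilbert--Schmidt norm of $v\mapsto B_c(U(S),A_s^{-1}v)$ through the finite-dimensional range space $\cN$. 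On $[0,\tau^\star]$ the ansatz yields $\|U\|_X=\|U_c+\varepsilon U_s\|_X\leq r_c+\varepsilon^{1-\kappa}\lesssim r_c$, so BDG plus Chebyshev again deliver $I_2=\cO(r_c)$.

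The main technical point to get right is the transition from the $Y$-valued continuity estimate on $B$ to the $\cN$-valued Hilbert--Schmidt bound that BDG requires; this is where finite-dimensionality of $\cN$ is used crucially, together with the correct interpretation of \eqref{sum:s} as a trace condition tailored to the operator $A_s^{-1}$. A secondary care-point is ensuring the stopping-time truncation is applied consistently inside the integrals (working with $U_s(\cdot\wedge\tau^\star)$ and $U(\cdot\wedge\tau^\star)$) so that the pathwise bounds $\|U_s\|_X\leq\varepsilon^{-\kappa}$ and $\|U\|_X\lesssim r_c$ genuinely hold under the expectation; after that, the conversion of $L^2$-estimates into the $\cO$-notation of Definition~\ref{order} is routine.
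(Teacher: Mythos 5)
Your proposal is correct and follows essentially the same route as the paper: both arguments reduce each integral to a Hilbert--Schmidt bound via the basis expansion of the covariance, invoke the Burkholder--Davis--Gundy inequality on the stopped interval together with the bounds $\|U_s\|_X\leq\varepsilon^{-\kappa}$ and $\|U_c\|_X\leq r_c$, and convert to the $\cO$-notation by Chebyshev. The only (harmless) differences are cosmetic: the paper works with a general exponent $p$ and phrases the bound in the $Y$-norm rather than passing to the equivalent $\cN$-norm, and your observation that the finiteness of $\sum_k\|A_s^{-1}P_sQ^{1/2}e_k\|_X^2$ comes from $A_s^{-1}W_s$ being a genuine $X$-valued Wiener process is, if anything, a slightly cleaner justification than the paper's appeal to \eqref{sum:s} at that point.
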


\begin{proof}
 The statement easily follows by the Burkholder-Davis-Gundy and Chebyshev's inequality.  
 Since $(\tilde{W}_c(T))_{T\geq 0}$ is a Wiener process on $\cN$ with covariance operator $P_c Q P^*_c$, we obtain by the Burkholder-Davis-Gundy inequality for $p>1$ 
\begin{align*}
    \E \sup\limits_{T\in[0,\tau^*]}\Big\| \int_0^T B_c(\txtd \tilde{W}_c(S), A^{-1}_s U_s(S)) \Big\|^p_Y &\leq C \Big(\E\int_0^{\tau^\star} \| B_c(\cdot, A^{-1}_s U_s(S)) Q^{1/2}_c\|^2_{\cL_2(X,Y)}~\txtd S\Big)^{p/2} \\
    & \leq C \E\Big( \int_0^{\tau^\star} \sum\limits_{k=1}^\infty\|B_c(Q^{1/2}_ce_k, A^{-1}_s U_s(S))\|^2_Y~\txtd S \Big)^{p/2}\\
    & \leq C  \Big(\sum\limits_{k=1}^\infty \|Q^{1/2}_c e_k\|^2_X\Big)^{p/2} \E \sup\limits_{T\in[0,\tau^{\star}]} \|A^{-1}_s U_s(T)\|^p_X.
\end{align*}

By Chebyshev's inequality we get for $c>0$ that 
\[ \P\Big(\sup\limits_{T\in[0,\tau^*]} \Big\|\int_0^T B_c(\txtd \tilde{W}_c(S), A^{-1}_s U_s(S) \Big\|^p_Y \geq c \Big)\leq C (c) \Big(\sum\limits_{k=1}^\infty \|Q^{1/2}_c e_k\|^2_X\Big)^{p/2} \E \sup\limits_{T\in[0,\tau^{\star}]} \|A^{-1}_s U_s(T)\|^p_X,     \]
meaning that 
\[\int_0^T B_c(\txtd \tilde{W}_c(S), A^{-1}_s U_s(S))
= \cO(\varepsilon^{-\kappa}).
\]

Similarly for the stochastic integral with respect to $\tilde{W}_s$, we have regarding that $(A^{-1}_s\tilde{W}_s(T))_{T\geq 0}$ is a Wiener process on $X$ with covariance operator 
$A_s^{-1}P_s Q P_s^* (A_s^{-1})^*$. Therefore 
\begin{align*}
    \E \sup\limits_{T\in[0,\tau^*]}\Big\| \int_0^T B_c(U_s(S), A^{-1}_s~\txtd \tilde{W}_s(S)) \Big\|^p_Y &\leq C\Big(\E \int_0^{\tau^\star} \Big\| B_c(U_c(S), A^{-1}_s Q^{1/2}_s) \Big\|^2_{\cL_2(X,Y)}~\txtd S \Big)^{p/2}\\
    & \leq \Big(\E \int_0^{\tau^\star} \sum\limits_{k=1}^\infty \|B_c(U_c(S), A^{-1}_s Q^{1/2}_se_k\|^2_Y~\txtd S \Big)^{p/2}\\
    & \leq C \E\sup\limits_{T\in[0,\tau^\star]}\|U_c(T)\|^p_X  \Big(\sum\limits_{k=1}^\infty \|A^{-1}_sQ^{1/2}_se_k\|^2_X\Big)^{p/2},
\end{align*}
which is finite by~\eqref{sum:s}. We notice
\begin{align*}
    \langle A^{-1}_s Q^{1/2}_s e_k, A^{-1}_s Q^{1/2}_s e_k  \rangle =\langle (A^{-1}_sQ^{1/2}_s)^* A^{-1}_s Q^{1/2}_s e_k, e_k  \rangle.
\end{align*}
Again, Chebyshev's inequality entails for $c>0$ that
\[ 
\P\Big(\sup\limits_{T\in[0,\tau^*]} \Big\| \int_0^T B_c(U_s(S), A^{-1}_s \txtd \tilde{W}_s(S)) \Big\|^p_Y \geq c \Big) \leq C(c,p) \E\sup\limits_{T\in[0,\tau^\star]}\|U_c(T)\|^p_X  \Big(\sum\limits_{k=1}^\infty \|A^{-1}_sQ^{1/2}_se_k\|^2_X\Big)^{p/2}. 
\]
In conclusion $\int_0^T B_c(U_s(S), A^{-1}_s\txtd\tilde{W}_s(S) )=\cO(r_c)$.
\end{proof}\\

 Later, we will remove the stopping time but first we generalize the previous statement. 

\begin{lemma}\label{o:integrals}
Let $A^{-1}_sW_s$ be a Wiener process in $X$. Then 
     $\int_0^T \cO(\varepsilon^k) \txtd A_s^{-1}W_s =\cO(\varepsilon^k)$ for $k\geq 0$.  
\end{lemma}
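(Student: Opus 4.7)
The plan is to reduce this statement to a direct application of the Burkholder-Davis-Gundy inequality, combined with a stopping-time construction that converts the probabilistic $\cO(\varepsilon^k)$ notation of Definition~\ref{order} into the kind of uniform pathwise bound required to estimate the Hilbert-Schmidt norm of the integrand. The structure of the argument mirrors Lemma~\ref{s:integrals}; the only new ingredient is the need to exploit that the $\cO(\varepsilon^k)$ bound on the integrand holds only with high probability rather than deterministically.

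First, I would fix $p \in (0,1)$ and interpret the integrand $\Phi(S) = \cO(\varepsilon^k)$ as a (possibly operator-valued) process for which, by Definition~\ref{order}, there is a constant $C_p>0$, independent of $\varepsilon \in (0,\varepsilon_0]$, such that $\P(\sup_{T\in[0,T_0]}\|\Phi(T)\|\leq C_p \varepsilon^k)\geq p$. Introduce the stopping time
\[
\tau_p := \inf\{T\in[0,T_0]:\|\Phi(T)\|\geq C_p\varepsilon^k\}\wedge T_0,
\]
so that $\P(\tau_p=T_0)\geq p$. On the event $\{\tau_p = T_0\}$, the stopped integrand is pathwise bounded by $C_p\varepsilon^k$.

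Next, I would apply the Burkholder-Davis-Gundy inequality to the stopped stochastic integral, using that $A_s^{-1}W_s$ is by hypothesis a genuine $X$-valued Wiener process with trace-class covariance $Q_\star := A_s^{-1}P_sQP_s^\ast(A_s^{-1})^\ast$. For any $q \geq 2$ this yields
\[
\E \sup_{T\in[0,T_0]}\Big\|\int_0^{T\wedge\tau_p} \Phi(S)\,\txtd A_s^{-1}W_s(S)\Big\|_X^{q}
\leq C\, \E\Big(\int_0^{\tau_p} \|\Phi(S)\,Q_\star^{1/2}\|_{\cL_2(X)}^{2}\,\txtd S\Big)^{q/2}
\leq C\, T_0^{q/2}\,(C_p\varepsilon^k)^{q}\,\mathrm{Tr}(Q_\star)^{q/2},
\]
where $\mathrm{Tr}(Q_\star)<\infty$ follows from the assumption that $A_s^{-1}W_s$ is a Wiener process in $X$.

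Finally, Chebyshev's inequality transforms this moment bound into a probability estimate: for any $\delta>0$,
\[
\P\Big(\sup_{T\in[0,T_0]}\Big\|\int_0^{T\wedge\tau_p}\Phi\,\txtd A_s^{-1}W_s\Big\|_X\geq \delta\,\varepsilon^k\Big)\leq C\,T_0^{q/2}\,\mathrm{Tr}(Q_\star)^{q/2}\,C_p^{q}\,\delta^{-q},
\]
which can be made arbitrarily small by choosing $\delta$ large, uniformly in $\varepsilon\in(0,\varepsilon_0]$. Combining this with $\P(\tau_p=T_0)\geq p$ and letting $p$ approach one gives the desired $\cO(\varepsilon^k)$ conclusion in the sense of Definition~\ref{order}. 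The only mild obstacle is bookkeeping between the two probability events (the event that the integrand stays bounded and the event from Chebyshev); a standard union bound handles this and keeps all constants independent of $\varepsilon$.
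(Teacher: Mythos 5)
Your proposal is correct and follows essentially the same route as the paper, whose proof simply defers to the Burkholder--Davis--Gundy plus Chebyshev argument of Lemma~\ref{s:integrals}; you have merely spelled out the localization by a stopping time that converts the high-probability $\cO(\varepsilon^k)$ bound on the integrand into a pathwise bound, which the paper leaves implicit. No gap.
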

\begin{proof}
This follows exactly by the same arguments as Lemma~\ref{s:integrals}.
\qed
\end{proof}




Now we turn to the first approximation result for $U_c$ in $\cN$ up to the stopping time.
\begin{lemma}
    We have that $U_c$ solves 
the following amplitude equation  
\begin{align}\label{ae}
\txtd a = 
[\nu\varepsilon^{-2} a +2\cF_c(a)]~\txtd T + \varepsilon^{-2}\sigma \txtd P_c \tilde{W}_T 
\end{align}
up to small residual in integral form, i.e. 
\begin{align}\label{uc}
\txtd U_c = 
[\nu\varepsilon^{-2} U_c + 2\cF_c(U_c)]~\txtd T + \txtd R+\varepsilon^{-2}\sigma \txtd P_c \tilde{W}_T 
\end{align}
with $R=\cO(\varepsilon^{1-2\kappa})$ on $[0,\tau^\star]$ in $\cN$.
\end{lemma}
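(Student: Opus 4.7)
The plan is to start from the ansatz equation \eqref{Uc} for $U_c$ and identify which parts of the nonlinear drift collapse onto the cubic $\cF_c$ via the It\^o trick sketched in \eqref{ito:approx}. The starting observation is that, by bilinearity and symmetry of $B$ together with the assumption that $B_c$ vanishes on $\cN\times\cN$,
\begin{equation*}
\varepsilon^{-1} B_c(U_c+\varepsilon U_s)
= \varepsilon^{-1}\bigl[B_c(U_c,U_c) + 2\varepsilon B_c(U_c,U_s) + \varepsilon^2 B_c(U_s,U_s)\bigr]
= 2 B_c(U_c,U_s) + \varepsilon B_c(U_s,U_s).
\end{equation*}
On $[0,\tau^\star]$, Lemma \ref{us:mild} gives $\|U_s\|_X = \cO(\varepsilon^{0-})$, so $\int_0^T \varepsilon B_c(U_s,U_s)\,\txtd S = \cO(\varepsilon^{1-2\kappa})$, and this term can be placed directly into the residual $R$.

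The core step is then to rewrite the $2 B_c(U_c,U_s)$ term using It\^o's formula applied to the bilinear functional $(U_c,U_s)\mapsto B_c(U_c,A_s^{-1}U_s)$. Using \eqref{Uc} and \eqref{Us} for the differentials and the fact that $B_c(\txtd\tilde W_c, A_s^{-1}\txtd\tilde W_s)=0$ by independence of $W_c$ and $W_s$, one obtains exactly the identity \eqref{ito:approx}. Solving for the $\varepsilon^{-2}$ drift term and multiplying through by $\varepsilon^2$ yields
\begin{equation*}
\int_0^T B_c(U_c,U_s)\,\txtd S
= -\int_0^T B_c(U_c,A_s^{-1}B_s(U_c,U_c))\,\txtd S + R_2(T)
= \int_0^T \cF_c(U_c)\,\txtd S + R_2(T),
\end{equation*}
where $R_2(T)$ collects the boundary term $\varepsilon^2 [B_c(U_c,A_s^{-1}U_s)]_0^T$, the drift remainder of order $\cO(\varepsilon^{-1-\kappa})\cdot T \cdot\varepsilon^2 = \cO(\varepsilon^{1-\kappa})$, and the stochastic integrals multiplied by $\varepsilon^2$. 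The boundary term is $\cO(\varepsilon^{2-\kappa})$ because $B_c(U_c,A_s^{-1}U_s) = \cO(r_c\,\varepsilon^{-\kappa})$ on $[0,\tau^\star]$, while the stochastic integrals are controlled precisely by Lemma \ref{s:integrals}, giving a total contribution of $\cO(\varepsilon^{2-\kappa})$. Hence $R_2 = \cO(\varepsilon^{1-\kappa})$, which is dominated by the $\cO(\varepsilon^{1-2\kappa})$ already appearing in the residual.

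Combining the two steps gives the integral identity
\begin{equation*}
U_c(T) = U_c(0) + \int_0^T\!\bigl[\nu\varepsilon^{-2}U_c(S) + 2\cF_c(U_c(S))\bigr]\,\txtd S + R(T) + \varepsilon^{-2}\sigma P_c\tilde W(T),
\end{equation*}
with $R = \cO(\varepsilon^{1-2\kappa})$ uniformly on $[0,\tau^\star]$, which is the claim. The main obstacle is the rigorous justification of It\^o's formula applied to $B_c(U_c,A_s^{-1}U_s)$ in this SPDE setting together with uniform-in-$\varepsilon$ control of the stochastic integrals in $Y$; this is exactly where Assumption \ref{n} (in particular \eqref{sum:c}--\eqref{sum:s}) and Lemma \ref{s:integrals} are needed, and where the stopping time $\tau^\star$ is used to ensure that the pathwise bounds on $U_c$ and $U_s$ hold. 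The rest is bookkeeping of $\varepsilon$-powers.
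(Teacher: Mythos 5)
Your proposal is correct and follows essentially the same route as the paper: expand the quadratic drift using $B_c(U_c,U_c)=0$, absorb $\varepsilon B_c(U_s,U_s)$ into $R_1=\cO(\varepsilon^{1-2\kappa})$ via Lemma \ref{us:mild}, and apply the It\^o trick to $B_c(U_c,A_s^{-1}U_s)$ with the cross-variation vanishing by independence, bounding the remainder through Lemma \ref{s:integrals}. The only (inconsequential) slip is the claim that the stochastic-integral contribution to $R_2$ is $\cO(\varepsilon^{2-\kappa})$ in total: the $\tilde W_s$ integral carries the prefactor $\sigma\varepsilon^{-3}$ and, after multiplying by $\varepsilon^2$, contributes only $\cO(\varepsilon)$, which still leaves $R_2=\cO(\varepsilon^{1-\kappa})$ as in the paper.
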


\begin{proof} 
We show that $R:=R_1+2R_2=\cO(\varepsilon^{1-2\kappa})$ on $[0,\tau^\star]$ in $\cN$.  

    Returning to~\eqref{Uc} we compute on $[0,\tau^*]$
    \[ \txtd U_c = [\nu\varepsilon^{-2} U_c + 2 B_c(U_c, U_s)]~\txtd T + \txtd R_1(T)+ \varepsilon^{-2}\sigma \txtd  \tilde{W}_c(T)
    \]
    with
    \begin{align*}
    R_1(T)= \int_0^T \varepsilon B_c(U_s,U_s)ds
        = \cO(\varepsilon^{1-2\kappa}),
    \end{align*}
    where we used that $B_c(U_c,U_c)=0$ and that $B_c(U_s,U_s)=\cO(\varepsilon^{-2\kappa})$ according to Lemma~\ref{us:mild}. Due to the It\^o trick above we have that 
    \[\int_0^T B_c(U_c,U_s)~\txtd S = - \int_0^T B_c(U_c, A^{-1}_s B_s(U_c) )~\txtd S + R_2(T)
     = \int_0^T \cF_c(U_c)~\txtd S + R_2(T),  \]
    where $R_2(T)$ is formally given by integrating~\eqref{ito:approx}. This entails
\begin{align*}
    R_2(T) =&\varepsilon^2 B_c(U_c(T), A^{-1}_s U_s(T)) \\&-  \varepsilon^2 B_c(U_c(0), A^{-1}_s U_s(0))-\int_0^T \cO(\varepsilon^{1-\kappa}) \txtd S - \int_0^T\cO(\varepsilon^{2-\kappa}) \txtd\tilde{W}_c -\int_0^T\cO(\varepsilon^{1-\kappa}) \txtd\tilde{W}_s. 
\end{align*}
Using that the stochastic integrals with respect to $\tilde{W}_c$ and $\tilde{W}_s$ are of order $\cO(\varepsilon^{-\kappa})$ respectively $\cO(1)$ as shown in Lemma~\ref{s:integrals}, one can conclude that $R=\cO(\varepsilon^{1-2\kappa})$ depending on $r_c$ on $[0,\tau^*]$.~The precise dependence on $r_c$ is not important for our aims, since we will remove the stopping time $\tau^*$ to conclude that $R=\cO(\varepsilon^{1-2\kappa})$ on $[0,T_0]$. \qed 
\end{proof}

\begin{lemma}\label{o:ae}
    The solution of the amplitude equation~\eqref{ae} is of order $\cO(1)$ on $[0,T_0]$. 
\end{lemma}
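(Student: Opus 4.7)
The plan is to use the pathwise shift $a = y + n$ that removes the additive noise and reduces the statement to a deterministic Lyapunov estimate on the resulting random ODE. Concretely, set
\[
n(T) := \varepsilon^{-2}\sigma P_c \tilde{W}_T,
\]
and note that by the scaling $\tilde{W}(T) = \varepsilon W(T\varepsilon^{-2})$ from Remark~\ref{rem:ou}, we have $n(T) = \sigma\varepsilon^{-1} P_c W(T\varepsilon^{-2})$. Since $P_c W$ is a Brownian motion on the finite-dimensional space $\cN$, a Doob/Chebyshev estimate together with $\sigma\varepsilon^{-2}\leq C$ yields, in the sense of Definition~\ref{order},
\[
\sup_{T\in[0,T_0]} \|n(T)\|_X = \cO(1).
\]

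Subtracting $\txtd n = \varepsilon^{-2}\sigma\, \txtd P_c\tilde{W}_T$ from \eqref{ae}, the shifted process $y := a - n$ solves pathwise the random ODE
\[
\tfrac{d}{dT} y = \nu\varepsilon^{-2}(y+n) + 2\cF_c(y+n),
\]
with initial data $y(0) = a(0) = \cO(1)$ by the ansatz $P_c u(0) = \cO(\varepsilon)$. Applying the classical chain rule to $\|y\|_\cN^2$ and invoking Assumption~\ref{b}, in particular the dissipation inequality \eqref{sign} with the roles $a \mapsto y$, $b \mapsto n$, together with $|\nu|\varepsilon^{-2}\leq C$ and Cauchy--Schwarz, one obtains
\[
\tfrac{d}{dT}\|y\|_\cN^2 \;\leq\; 2C\|y\|_\cN^2 + 2C\|y\|_\cN\|n\|_X - 4\delta\|y\|_\cN^4 + 4C_\delta\|n\|_X^4.
\]
Absorbing the quadratic and cross terms into the quartic dissipation via Young's inequality and using the uniform bound on $\|n\|_X$ above gives, on the event where that bound holds,
\[
\tfrac{d}{dT}\|y\|_\cN^2 \;\leq\; K - c\|y\|_\cN^4,
\]
for constants $K,c>0$ independent of $\varepsilon$. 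A standard comparison argument then shows that $\|y(T)\|_\cN^2$ cannot exceed $\max\{\|y(0)\|_\cN^2, \, (K/c)^{1/2}\}$ on $[0,T_0]$: once $\|y\|_\cN^2$ crosses the latter threshold the derivative is negative. Hence $\|y(T)\|_\cN = \cO(1)$ uniformly on $[0,T_0]$, and combining with the noise bound yields $\|a(T)\|_\cN \leq \|y(T)\|_\cN + \|n(T)\|_X = \cO(1)$, as claimed.

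The only delicate point is to keep all constants independent of $\varepsilon$, which hinges on the ansatz scalings $\nu\varepsilon^{-2}\leq C$ and $\sigma\varepsilon^{-2}\leq C$ combined with \eqref{sign}, which supplies genuine quartic dissipation strong enough to dominate both the linear drift and the shifted-noise forcing. The advantage of the pathwise shift over a direct It\^o estimate on $\|a\|_\cN^2$ is that it avoids the It\^o correction and any stopping-time argument, and it leverages the finite-dimensionality of $\cN$ to estimate the noise term deterministically in sup-norm.
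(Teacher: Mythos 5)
Your proof is correct and follows essentially the same route as the paper: subtract the rescaled noise $\varepsilon^{-2}\sigma\tilde{W}_c$ pathwise, derive a random ODE for the shifted process, and close with an energy estimate that exploits the dissipation inequality \eqref{sign} together with $\nu\varepsilon^{-2}=\cO(1)$ and $\sigma\varepsilon^{-2}=\cO(1)$. The only (immaterial) difference is the last step, where you absorb the lower-order terms into the quartic dissipation via Young's inequality and conclude by a comparison/invariant-region argument, whereas the paper keeps the quadratic term and closes with Gronwall's inequality on the fixed interval $[0,T_0]$.
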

\begin{proof}
    This follows immediately from~\eqref{ae} using that $\nu\varepsilon^{-2}=\cO(1)$ respectively $\sigma\varepsilon^{-2}=\cO(1)$ together with the sign condition on $\cF_c$~\eqref{sign}. More precisely, subtracting $\varepsilon^{-2}\sigma \tilde{W}_c$ from~\eqref{ae} we obtain for $\tilde{a}=a-\varepsilon^{-2}\sigma$
    \[ \partial_T \tilde{a} =\nu \varepsilon^{-2} (\tilde{a}+\varepsilon^{-2}\sigma \tilde{W}_c) + 2\cF_c(\tilde{a} +\varepsilon^{-2}\sigma\tilde{W_c}  ). \]
Multiplying by $\tilde{a}$ we get for a constant $C>0$ that
\begin{align*}
    \frac{1}{2}\partial_T \|\tilde{a}\|^2&=\nu \varepsilon^{-2} \langle \tilde{a} +\varepsilon^{-2}\sigma \tilde{W}_c,\tilde{a}\rangle + 2\langle \cF_c (\tilde{a}+\varepsilon^{-2}\sigma\tilde{W}_c), \tilde{a}\rangle\\
    & \leq C \|\tilde{a}(T)\|^2 +C\|\tilde{W}_c(T)\|^4 - C \|\tilde{a}(T)\|^4.
\end{align*}
Gronwall's inequality yields for two arbitrary constants $c,C>0$ that
\[\|\tilde{a}(T)\|^2 \leq c\|\tilde{a}(0)\|^2 + C\int_0^T e^{c(T-S)}\| \tilde{W}_c(T)\|^4~\txtd S, \]
\end{proof}
proving the statement. \qed

\begin{theorem}\label{thm:approximation} 
Fix $T_0>0$. 
Let $u$ be a solution of the SPDE~\eqref{spde} such that $U_s(0)=\cO(1)$ and $U_c (0)=\cO(1)$. Then for all $p\in(0,1)$ and small $\kappa>0$ there exists a large constant $C_p$
and a set $\Omega_p$ with probability larger than $p$ 
such that on $\Omega_p$
\[ \sup\limits_{T\in[0,T_0]} \|U_c(T)\|  \leq C_p \qquad \sup\limits_{T\in[0,T_0]} \|U_s(T)\| \leq C_p \varepsilon^{-\kappa}.
\]

Moreover, if for some $c$, the $\cN$-valued process $a$ solves the amplitude equation~\eqref{ae} with initial condition such that $\|U_c(0)-a(0)\| \leq c\varepsilon$, then  on $\Omega_p$
\[ \sup\limits_{T\in[0,T_0]}\| U_c(T)- a(T)\| \leq C\varepsilon^{1-\kappa}\]
for $\varepsilon\in (0,\varepsilon_0]$ and some constant $C>0$ .
\end{theorem}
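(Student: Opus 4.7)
The plan is to first work on the stochastic interval $[0,\tau^\star]$, where all previous lemmas apply, and then remove the stopping time via a bootstrap/contradiction argument that forces $\tau^\star=T_0$ with high probability. I fix $\kappa>0$ small, choose the radius $r_c$ in \eqref{stopping:time} large (to be determined), and fix a large probability $p\in(0,1)$. Define $\Omega_p$ as the intersection of the events on which the $\mathcal{O}$-statements of Lemma \ref{us:mild}, Lemma \ref{s:integrals}, Lemma \ref{o:integrals}, Lemma \ref{o:ae}, and the residual bound $R=\mathcal{O}(\varepsilon^{1-2\kappa})$ hold with probability at least $p$ (by Definition \ref{order} this can be arranged by increasing the constants $C_p$).

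Next I compare $U_c$ and $a$. On $[0,\tau^\star]$, subtracting \eqref{ae} from \eqref{uc} the noise terms cancel, so the difference $D:=U_c-a$ satisfies
\begin{equation*}
\txtd D = \nu\varepsilon^{-2} D\,\txtd T + 2\bigl[\cF_c(U_c)-\cF_c(a)\bigr]\,\txtd T + \txtd R.
\end{equation*}
Taking the inner product with $D$ and using the fundamental theorem of calculus to write $\cF_c(U_c)-\cF_c(a)=\int_0^1 D\cF_c(a+sD)D\,\txtd s$, Assumption \ref{b} (the sign condition $\langle D\cF_c(\cdot)b,b\rangle\le 0$) kills the cubic contribution and leaves
\begin{equation*}
\tfrac12\partial_T\|D\|^2 \le \nu\varepsilon^{-2}\|D\|^2 + \langle \dot R, D\rangle.
\end{equation*}
Since $\nu\varepsilon^{-2}\le C$ and, after integrating the $\txtd R$ term by parts (recalling $R=\mathcal{O}(\varepsilon^{1-2\kappa})$ and $D(0)=\mathcal{O}(\varepsilon)$), a Gronwall argument yields $\|D\|=\mathcal{O}(\varepsilon^{1-2\kappa})$ uniformly on $[0,\tau^\star]$, which (after rebaptising $\kappa$) gives the claimed $\mathcal{O}(\varepsilon^{1-\kappa})$.

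Finally, I remove the stopping time. By Lemma \ref{o:ae} the amplitude solution satisfies $\sup_{[0,T_0]}\|a\|\le K_p$ on $\Omega_p$ for some constant $K_p$ independent of $\varepsilon$. Combining with the error bound just obtained,
\begin{equation*}
\sup_{T\in[0,\tau^\star]}\|U_c(T)\| \le K_p + C\varepsilon^{1-\kappa} \le K_p+1 =:r_c/2
\end{equation*}
for $\varepsilon\in(0,\varepsilon_0]$ small, provided we chose $r_c:=2(K_p+1)$ at the outset. Similarly, Lemma \ref{us:mild} with exponent $\kappa/2$ gives $\sup_{[0,\tau^\star]}\|U_s\|\le C\varepsilon^{-\kappa/2}<\varepsilon^{-\kappa}/2$ for $\varepsilon$ small. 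Hence on $\Omega_p$ the stopping conditions in \eqref{stopping:time} are \emph{strict}, so by path continuity $\tau^\star=T_0$. The two supremum bounds and the approximation estimate then extend from $[0,\tau^\star]$ to $[0,T_0]$, completing the proof. The main obstacle is closing this bootstrap loop: the residual $R$ in \eqref{uc} and the auxiliary bound on $U_s$ were both derived \emph{on} $\{t\le\tau^\star\}$, so one must carefully verify that the constants in these bounds are uniform in $r_c$ and in $\varepsilon$ before enlarging $r_c$ to beat them; this is why the dependence of $R_2$ on $r_c$ flagged after \eqref{ito:approx} is inessential, as stressed in the preceding lemma.
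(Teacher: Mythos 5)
Your proposal is correct and follows essentially the same route as the paper: a Gronwall comparison of $U_c$ with $a$ on $[0,\tau^\star]$ (the paper delegates this to Lemma~\ref{lem:SDE-contdep} via local Lipschitz continuity of the cubic after subtracting $R$, whereas you use the monotonicity $\langle D\cF_c(\cdot)b,b\rangle\le 0$ --- either works on the stopped interval where everything is bounded), followed by the same bootstrap: the $r_c$-independent bound on $a$ from Lemma~\ref{o:ae} fixes $r_c$, and smallness of $\varepsilon_0$ makes both stopping conditions strict, forcing $\tau^\star=T_0$. Your closing remark correctly identifies the only delicate point, namely that the $r_c$-dependence of the constants in $R$ and in Lemma~\ref{us:mild} is harmless because $r_c$ is fixed before $\varepsilon_0$ is chosen.
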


\begin{proof}
From Lemma \ref{lem:SDE-contdep} we know that  $U_c$ is close to the solution of the AE~\eqref{ae}, i.e. if $a(0)-U_c(0) =\cO(\varepsilon)$ and $a$ solves AE, then 
\[
a-U_c = \cO(\varepsilon^{1-}) \quad\text{on }[0,\tau^\star].
\]
Furthermore, Lemma~\ref{o:ae} shows that $a=\cO(1)$ on $[0,T_0]$. Furthermore we know that $a$ is bounded by $U_c$ up to $\tau^\star\leq T_0$ independently of $r_c$. Thus for every $p\in(0,1)$ we can choose a sufficiently large constant $C_p$, $r_c\geq C_p$ and $\varepsilon_0>0$ sufficiently small 
to obtain that $\tau^\star\geq T_0$ on a set of large probability $\Omega_p$, as $U_c$ remains bounded by a fixed constant. We now derive a bound for $U_s$ on the interval $[0,T_0]$. To this aim we use $B_s(U_c+\varepsilon U_s) = B_s(U_c,U_c) +2\varepsilon B_s(U_c,U_s) + \varepsilon^2 B_s(U_s,U_s)$ and the mild 
formulation~\eqref{us:variation}. Therefore we obtain that
    \begin{align*}
    \|U_s(T)\|_X &\leq e^{-T\varepsilon^{-2}\mu} \|U_s(0)\|_X + \varepsilon^{-2}\nu \int_0^T e^{-(T-S)\varepsilon^{-2}\mu} \|U(S)\|_X~\txtd S \\ &+ C \varepsilon^{2\alpha-2} \int_0^T (T-S)^{-\alpha} e^{-(T-S)\varepsilon^{-2}\mu} \|B_s(U_c(S)+\varepsilon U_s(S))\|_Y  \txtd S \\
    &+ \sigma \varepsilon^{-3} \|\tilde{Z}_\varepsilon(T)\|_X.
\end{align*}

Using the substitution $y=\frac{T-S}{\varepsilon^{-2}}$ to bound the last integral, results in 
\begin{align*}
    \varepsilon^{2\alpha-2} \int_0^T (T-S)^{-\alpha} e^{(T-S)\varepsilon^{-2}\mu}  \txtd S &= \int_0^{T\varepsilon^{-2}} e^{-\mu y} y^{-\alpha}  ~\txtd y 
    =\cO(1).
\end{align*}
Using that $U_s(0)=\cO(1)$, $\tilde{Z}_\varepsilon(T)=\cO(\varepsilon^{1-})$, $\nu \varepsilon^{-2}=\sigma\varepsilon^{-2}=\cO(1)$  and that  $\|U_c(T)\|=\cO(1)$ on ${[0,T_0]}$ by the first step, proves the statement. \qed

We also note for later use the following corollary, we can characterize the term, where the largest error contribution comes from.
\begin{corollary}\label{cor:lucky}
    Under the assumptions of Theorem \ref{thm:approximation} we have 
    \[\|U_s - \sigma^2\varepsilon^{-3}\tilde{Z}_\varepsilon\| =\cO(1)\quad\text{ on } [0,T_0].\]
\end{corollary}

\begin{remark}{(Attractivity)}
  Note that the assumptions on the initial condition $P_cu(0)=\cO(\varepsilon)$ and $P_su(0)=\cO(\varepsilon^2)$ are not restrictive. If $u(0)=\cO(\varepsilon)$ one can show that there exists a time $T_\varepsilon \approx c \ln(1/\varepsilon)$ such that $P_cu(0)=\cO(\varepsilon)$ and $P_su(0)=\cO(\varepsilon^2)$ after that we can apply the approximation result.~The proof of this statement relies on a modified treatment of the initial condition in the mild formulation providing appropriate bounds for $u_c$ and $u_s$. We refer to~\cite[Remark 18]{BlMo} for more details.

\end{remark}

\end{proof}


\section{Main result. Upper and lower bounds for finite-time Lyapunov exponents}\label{sec:main}


First we carefully analyze the linearization of the SPDE and then give an approximation result for the Lyapunov-exponents.
 
\subsection{Linearization}\label{sec:linearization}
On the slow time-scale $v(t)=\varepsilon V(\varepsilon^2 t)$
we linearize the SPDE~\eqref{spde} along an arbitrary solution $u$. Then on the slow time scale we consider $u(t)=\varepsilon U(t\varepsilon^2)$ and $v(t)=\varepsilon V(t\varepsilon^2)$.  Recalling that $\nu\varepsilon^{-2}\leq C$ and the fact that the nonlinearity $B$ is quadratic, we obtain
\begin{equation}\label{linear:v}
\begin{cases}
    \txtd V =  [\varepsilon^{-2}AV + \nu\varepsilon^{-2} V + 2\varepsilon^{-1} B(U, V)]~\txtd T 
    \\
    V(0)=\varepsilon^{-1}v_0.
    \end{cases}
    \end{equation}
Thereafter we split
\[V= V_c + V_s\]
and prove that with high probability  we have for a time $T_\varepsilon$ that 
 \[
     \sup\limits_{[0,T]} \|V_c\| = \cO(1) \qquad 
   {\sup\limits_{[T_\varepsilon,T]} \|V_s\| =\cO(\varepsilon)}.
\]

\begin{remark}
\begin{itemize}
    \item [1)] Note that in contrast to the previous ansatz $U=U_c+\varepsilon U_s$ we do not put an $\varepsilon$ here, as $V_s$ is allowed to be of order one at time $0$. 
\item [2)] Moreover for $V_s(0)=0$, or sufficiently small, we expect $T_\varepsilon=0$. Here we focus only on $V_s(0)\neq 0$ since the other case is simpler.
\item [3)] The term $\varepsilon^{-1}$ appearing in front of the nonlinearity in~\eqref{linear:v} is large and has to be handled by completely different tools than the ones in~\cite{BlNe:23} where this factor canceled due to the cubic structure of the nonlinearity. 
\end{itemize}    
\end{remark}
 
\begin{theorem} \label{thm:linear}
   Let $a$ be a solution of the AE~\eqref{ae}, $u$ be a solution of the SPDE~\eqref{spde}. Then for all $p\in(0,1)$ and small $\kappa>0$ there exists a constant $C_p$ and a set $\Omega_p$ with probability larger than $p$ such that
we have on $\Omega_p$ that 
\[\|V_s(T)\|\leq C_p \varepsilon, \quad \|V_c(T) -\varphi(T) \|\leq C_p\varepsilon^{1-\kappa} \quad \text{ for } T\in[T_\varepsilon,T], \] where $\varphi$ is the linearization of the AE~\eqref{ae} around an arbitrary solution $a$ satisfying the equation
 \[
\txtd \varphi = 
[\nu\varepsilon^{-2} \varphi +{2}B_c(\varphi,A_s^{-1}B_s(a,a))+{4}B_c(a,A_s^{-1}B_s(a,\varphi))]~\txtd T.
\]
\end{theorem}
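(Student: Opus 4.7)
The plan is to mirror the bootstrap of Theorem~\ref{thm:approximation}, adding to $\tau^\star$ an extra stopping time $\tau^\dagger:=\inf\{T\in[0,T_0]:\|V_c(T)\|_X\geq R'\}$ and removing it at the end by choosing $R'$ large on a set $\Omega_p$ of probability exceeding $p$. Projecting~\eqref{linear:v}, using $B_c|_{\cN\times\cN}=0$ together with $U=U_c+\varepsilon U_s$ and $V=V_c+V_s$, gives
\[
\txtd V_c = \bigl[\nu\varepsilon^{-2}V_c + 2\varepsilon^{-1}B_c(U_c,V_s) + 2B_c(U_s,V_c) + 2B_c(U_s,V_s)\bigr]\txtd T
\]
and a $V_s$-equation with drift $\varepsilon^{-2}A_sV_s+\nu\varepsilon^{-2}V_s+2\varepsilon^{-1}P_sB(U,V)$. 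Two ingredients carry the argument: a mild-formulation bound on $V_s$, and an It\^o trick that simultaneously neutralises the singular term $\varepsilon^{-1}B_c(U_c,V_s)$ and the potentially large $B_c(U_s,V_c)$.

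For $V_s$, Assumption~\ref{a} with the substitution $y=\varepsilon^{-2}(T-S)$ yields $\varepsilon^{-1}\int_0^T\|P_se^{\varepsilon^{-2}(T-S)A}\|_{\cL(Y,X)}\txtd S=\cO(\varepsilon)$, so that on the stopping region, where $\|P_sB(U,V)\|_Y=\cO(1)$, the nonlinear convolution is $\cO(\varepsilon^{1-\kappa})$. The transient contribution $e^{-\varepsilon^{-2}T\mu}\|V_s(0)\|_X$ becomes $\cO(\varepsilon^{1-\kappa})$ as soon as $T\geq T_\varepsilon:=C\varepsilon^2\log(1/\varepsilon)$ for a suitable $C$; this is precisely the origin of the time shift in the statement.

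Since $V$ carries no martingale part, It\^o's formula reduces to the product rule on $\cN$. I would apply it to $B_c(U_c,A_s^{-1}V_s)$ and to $B_c(A_s^{-1}U_s,V_c)$; using the dominant $\varepsilon^{-2}A_s$-piece of each drift to solve for $B_c(U_c,V_s)$ and $B_c(U_s,V_c)$ and then multiplying by $\varepsilon$, one obtains (with boundary terms of order $\varepsilon\|V_s(T_\varepsilon)\|_X=\cO(\varepsilon^{2-\kappa})$ and stochastic integrals against $\tilde W_c,\tilde W_s$ bounded as in Lemma~\ref{s:integrals} via~\eqref{sum:c}--\eqref{sum:s})
\begin{align*}
\int_{T_\varepsilon}^T 2\varepsilon^{-1}B_c(U_c,V_s)\,\txtd S &= -4\int_{T_\varepsilon}^T B_c\!\bigl(U_c,A_s^{-1}B_s(U_c,V_c)\bigr)\txtd S+\cO(\varepsilon^{1-\kappa}),\\
\int_{T_\varepsilon}^T 2B_c(U_s,V_c)\,\txtd S &= -2\int_{T_\varepsilon}^T B_c\!\bigl(A_s^{-1}B_s(U_c,U_c),V_c\bigr)\txtd S+\cO(\varepsilon^{1-\kappa}).
\end{align*}
Summing and recognising the Fr\'echet derivative $D\cF_c(a)[b]=-B_c(b,A_s^{-1}B_s(a,a))-2B_c(a,A_s^{-1}B_s(a,b))$, the combined drift coincides (up to the sign convention of $\cF_c$) with the drift of the $\varphi$-equation evaluated at $a\mapsto U_c$. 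Subtracting, splitting
\[
D\cF_c(U_c)[V_c]-D\cF_c(a)[\varphi]=\bigl(D\cF_c(U_c)-D\cF_c(a)\bigr)[V_c]+D\cF_c(a)[V_c-\varphi],
\]
and invoking $\|U_c-a\|=\cO(\varepsilon^{1-\kappa})$ from Theorem~\ref{thm:approximation} together with the monotonicity $\langle D\cF_c(a)b,b\rangle\leq 0$ from Assumption~\ref{b}, a Gr\"onwall estimate in $\|V_c-\varphi\|^2$ closes the bootstrap; standard stopping-time removal then transfers the bounds to $\Omega_p$.

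The principal obstacle is the coupled bootstrap: the $V_s$-bound requires $\|V_c\|$ already under control (since $V_c$ enters $\|B(U,V)\|_Y$), while the $V_c$-bound needs $V_s$ already decayed so that the It\^o boundary term $\varepsilon B_c(U_c,A_s^{-1}V_s)$ is genuinely small. The choice $T_\varepsilon\sim\varepsilon^2\log(1/\varepsilon)$ is the unique balance making this circular dependence work: large enough to absorb the initial $\cO(\varepsilon^{-1})$-transient of $V_s$, yet short enough that $V_c$, controlled a priori by integrating the exponentially decaying $V_s$ against the singular $\varepsilon^{-1}$-kernel (an integral of order $\varepsilon^{-1}\cdot\|V_s(0)\|_X\cdot\cO(\varepsilon^2)=\cO(1)$), stays $\cO(1)$ at $T_\varepsilon$ before the sharper bounds take over.
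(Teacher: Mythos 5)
Your proposal is correct and follows essentially the same route as the paper: a mild-formulation bound for $V_s$ with the exponential transient absorbed after $T_\varepsilon\sim\varepsilon^2\log(1/\varepsilon)$, the two It\^o-trick identities applied to $B_c(U_c,A_s^{-1}V_s)$ and $B_c(A_s^{-1}U_s,V_c)$ yielding exactly the paper's replacement formulas, identification of the resulting drift with $D\cF_c(U_c)V_c$, and a Gr\"onwall comparison with $\varphi$ using the monotonicity of $D\cF_c$ and $\|U_c-a\|=\cO(\varepsilon^{1-})$. The only cosmetic differences are that the paper's stopping time $\tau^v$ controls both $V_c$ and $V_s$ (yours only $V_c$, which still closes by absorbing the $\cO(\varepsilon)\sup\|V_s\|$ term) and that it places the monotone operator on $D\cF_c(U_c)$ rather than $D\cF_c(a)$ in the final splitting.
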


\begin{proof}
Let us remark, that in principle we need no stopping time for $V$ since $U$ is bounded on a set with large probability, and the PDE for $V$ is linear. But as the system  for $V_c$ and $V_s$ is coupled, we introduce  for simplicity for $r_c$ and $r_s$ larger than one the stopping time
\begin{align}\label{stopping:timeV}
   \tau^v:=\inf\{T\in[0,T_0] : \|V_c(T)\|_X \geq r_c, \|V_s(T)\|_X \geq r_s\}. 
\end{align}

\paragraph{1st step} 
Using the mild formulation for $V_s$ we derive
\[
V_s(T)= e^{T\varepsilon^{-2} A_s} V_s(0)+ \int_0^T e^{(T-S)\varepsilon^{-2} A_s} [\nu\varepsilon^{-2} V_s(S)+ 2\varepsilon^{-1} B_s(U(S),V(S))]~\txtd S. 
\]
To compute the last term we use the ansatz $U=U_c+\varepsilon U_s$ and $V=V_c+V_s$ to get
\[
B_s(U,V)=B_s(U_c,V_c) +B_s(U_c,V_s) + {\varepsilon}B_s(U_s,V_c) + {\varepsilon} B_s(U_s,V_s).
\]
We further use that $U_c=\cO(1)$, 
and $U_s=\cO(\varepsilon^{0-})$ on the slow time scale
 $[0,T_0]$ from Theorem~\ref{thm:approximation} and the definition of the stopping time $\tau^v$ to obtain 
 \[
\|B_s(U,V)\|_Y \leq \cO(1)(r_c+r_s) + \cO(\varepsilon^{1-})(r_c+r_s).
\]
Therefore we infer on $[0,\tau^v]$ that
\begin{align}
\|V_s(T)\| &\leq  e^{- \mu T\varepsilon^{-2}} \|V_s(0)\| + \nu\varepsilon^{-2} \int_0^T   e^{-\mu(T-S) \varepsilon^{-2}} \cO(r_s)~\txtd S
\\&
+ \varepsilon^{2\alpha} \int_0^T  e^{- \mu (T-S)\varepsilon^{-2}} (T-S)^{-\alpha} [\cO(\varepsilon^{-1}(r_c+r_s))] ~\txtd S \nonumber\\
 &\leq  e^{- \mu T\varepsilon^{-2}} \|V_s(0)\| + \cO(\varepsilon)  (r_c+r_s). \label{e:boundVs}
\end{align}
The last bound was obtained using the substitution $y=\frac{T-S}{\varepsilon^{-2}}$ in the last integral to conclude that
\begin{align*}
    \varepsilon^{2\alpha} \int_0^T (T-S)^{-\alpha} e^{(T-S)\varepsilon^{-2}\mu}  \txtd S =\int_0^{T\varepsilon^{-2}} e^{-\mu y} y^{-\alpha} \varepsilon^{2}~\txtd y 
    &= \cO(\varepsilon^2).
\end{align*}

Thus for $\varepsilon$ sufficiently small  
\[
\sup_{[0,\tau^v]} \|V_s(T)\| \leq \frac12r_s
\quad\text{and}\quad 
\sup_{[T_\varepsilon,\tau^v]} \|V_s(T)\| =\cO(\varepsilon(r_c+r_s))
\]
provided that $r_s \geq3\|V_s(0)\|$.  

Moreover, we can modify \eqref{e:boundVs} to obtain
\begin{equation}
    \label{e:lucky2}
    \|V_s(T)\| \leq  e^{- \mu T\varepsilon^{-2}} \|V_s(0)\| 
    +  \cO(\varepsilon^{2\alpha-1}) \int_0^T  e^{- \mu (T-S)\varepsilon^{-2}} (T-S)^{-\alpha} \|V_c(S)\| ~\txtd S
    +\cO(\varepsilon) r_s. 
\end{equation}

\paragraph{2nd step} We show that $V_c=\cO(1)$ on $[0,T_0]$, where
\begin{align}\label{eq:vc}
\txtd V_c = [\nu\varepsilon^{-2}V_c + 2\varepsilon^{-1}B_c(U,V) ]~\txtd T.
\end{align}
We recall that we consider the case $\nu\varepsilon^{-2}\leq C$. 
Again we split $B_c(U,V)$ and observe that $B_c(U_c, V_c)=0$ by the assumption  $B_c(\cdot,\cdot)$ on $\cN\times \cN$. For $B_c(U,V)$ we use again $U=U_c +\varepsilon U_s$ and $V=V_c+V_s$. 
Thus 
\[
\txtd V_c = [\nu\varepsilon^{-2}V_c + 2\varepsilon^{-1}B_c(U_c,V_s) +2 B_c(U_s,V_c)+ 2B_c(U_s,V_s) ]~\txtd T.
\]
As before, we recall that $U_c=\cO(1)$  and $U_s=\cO(\varepsilon^{0-})$ on $[0,T_0]$ due to Theorem~\ref{thm:approximation}. Thus using \eqref{e:boundVs}
\[
\int_0^T B_c(U_s,V_s)~ \txtd S = 
\int_0^T B_c(\cO(\varepsilon^{0-}), e^{-\mu T \varepsilon^{-2}}\cO(1)+\cO(\varepsilon^{1-})(r_s+r_c)  )~\txtd S      =  \cO(\varepsilon)(r_s+r_c).  
\] 
Similarly, using additionally \eqref{e:lucky2} we  obtain
\begin{align*}
\varepsilon^{-1}\int_0^T B_c(U_c,V_s) ~\txtd S 
= & 
\cO(\varepsilon^{-1}) \int_0^T e^{-\mu S\varepsilon^{-2}} ~\txtd S    +\cO(1) r_s \\&
+ 
 \cO(\varepsilon^{2\alpha-2}) \int_0^T \int_0^S  e^{- \mu (S-s)\varepsilon^{-2}} (S-s)^{-\alpha} \|V_c(s)\| ~\txtd s~\txtd S 
\end{align*}
Using Fubini we obtain
\[ 
\varepsilon^{-1}\int_0^T B_c(U_c,V_s) ~\txtd S 
= \cO(1) \int_0^T \|V_c(S)\| \txtd S  + \cO(1) r_s .
\]

Finally, using Corollary \ref{cor:lucky} and $\sigma\leq \varepsilon^2$
\[
\int_0^T B_c(U_s,V_c)~ \txtd S 
     =\cO(1) (1+ \varepsilon^{-1} \|\tilde{Z}_\varepsilon\|) \int_0^T\|V_c(S)\|~\txtd S.   
\] 
Thus from~\eqref{eq:vc} we have on $ [0,\tau^v]$ for small $\varepsilon$
\[
\|V_c(T)\| 
\leq  \cO(1) (1+ \varepsilon^{-1} \|\tilde{Z}_\varepsilon\|)\int_0^T \|V_c(S)\|~\txtd S 
+\cO(1) r_s + \cO(\varepsilon) r_c .
\]
 Using Gronwall's inequality 
 and $\int_0^{T_0}\|\tilde{Z}_\varepsilon(S)\|\txtd S = \cO( \varepsilon)$
 we have
\[ \sup_{[0,\tau_v]}\|V_c(T)\| = \cO(1)\|V_c(0)\|+\cO(1)r_s+\cO(\varepsilon) r_c < r_c
\]
in case we fix 
 $r_c \gg\|V_c(0)\|$ and $\varepsilon\ll1$. In this case, based on the bounds obtained above, we can remove the stopping time and infer that
\[ V_s=\cO(\varepsilon) \text{ on } [T_\varepsilon, T_0] \text{ and } V_c=\cO(1) \text{ on } [0,T_0]. \]


\paragraph{3rd step} In order to show that $V_c$ solves a linearized AE around $U_c$,   we first use as before the It\^o-trick to replace 
\[V_s = -2\varepsilon A_s^{-1}B_s(U_c,V_c) +\cO(\varepsilon^2) \]
in $B_c(U_c,V_s)$. Replacing $U_s$ is done similarly below.

Recall that 
\[ \txtd V_c  =[\nu\varepsilon^{-2} V_c + 2\varepsilon^{-1}  B_c(U,V) ]~\txtd T.  \]
Splitting
\[ B_c(U,V)=B_c(U_c+\varepsilon U_s,V_c+V_s) = \varepsilon B_c(U_s,V_c) + B_c(U_c,V_s) + \varepsilon B_c(U_s,V_s) \] 
we see that we have to replace $V_s$ and $U_s$ above in order to get an equation for $V_c$ depending only on $U_c$ and $V_c$.  
Therefore we compute using It\^o's formula
\[
\txtd B_c(U_c,A_s^{-1}V_s) 
 = B_c(\txtd U_c,A_s^{-1}V_s)+   B_c(U_c,A_s^{-1} \txtd V_s),
\]
where from~\eqref{uc} we know that (up to a residual term $\text{Res}$ of order $\cO(\varepsilon^{1-})$)
\[
~\txtd U_c = \cO(1)\txtd T +  \cO(1) \txtd W_c +\txtd \text{Res}.
\]
Moreover on $[0,T_0]$ we have
\[ \txtd V_s = [\varepsilon^{-2} A V_s + \nu\varepsilon^{-2} V_s + 2\varepsilon^{-1} B_s(V_c,U_c)   +2\varepsilon^{-1} B_s(U_c,V_s) + 2 B_s(U_s,V_c)  + 2B_s(U_s,V_s)]~\txtd T. \]
Using  \eqref{e:boundVs} for $V_s$ but now on $[0,T_0]$, $U_c=\cO(1)$, $V_c=\cO(1)$ and $U_s=\cO(\varepsilon^{0-})$ on $[0,T_0]$, we get 
\[ \txtd V_s = [\varepsilon^{-2} A V_s + 2\varepsilon^{-1}B_s(V_c,U_c) +\cO(\varepsilon^{-1})e^{-T\mu \varepsilon^{-2}} +\cO(\varepsilon^{0-})  ]~\txtd T. \]

The relevant terms arise from the following computation
\begin{align*}
B_c(U_c,A^{-1}_s\txtd V_s)&= [B_c(U_c,  \varepsilon^{-2} V_s +2\varepsilon^{-1} A^{-1}_s B_s(V_c,U_c) + \cO(\varepsilon^{-1}) e^{-T\mu \varepsilon^{-2}} +\cO(\varepsilon^{0-})]\txtd T
\end{align*}
Summarizing  all the terms of higher order 
we obtain on $[0,T_0]$
\[
B_c(U_c,A_s^{-1}V_s) = 
\varepsilon^{-2} \int_0^T B_c(U_c,V_s)~\txtd S +2 \varepsilon^{-1} \int_0^T B_c(U_c,A_s^{-1}B_s(V_c,U_c))~\txtd S + \cO(\varepsilon^{0-}).     
\]
Thus 
\[\varepsilon^{-1}
\int_0^T B_c(U_c,V_s)~\txtd S= - 2 \int_0^T B_c(U_c,A_s^{-1}B_s(V_c,U_c))~\txtd S + \cO(\varepsilon^{1-}).
\]
For the next argument where we replace $U_s$, we have to make sure that the stochastic integral  
\[ \int_0^T B_c(\cO(1)\txtd W_c(S) , A^{-1}_sV_s(S)) \]
is well-defined and $\cO(1)$ and similarly for any other $\cO$-term. This follows as in Lemma~\ref{s:integrals}. More precisely, since $(W_c(T))_{T\in[0,T_0]}$ is a Wiener process on $\cN$ with covariance operator $Q_c$ we have by the Burkholder-Davis-Gundy inequality
\begin{align*}
    \E \sup\limits_{T\in[0,T_0]} \| \int_0^{T} B_c(\cO(1)~\txtd W_c(S), A^{-1}_sV_s(S))\|^P_Y &\leq C \Big( \E \int_0^{T_0} \| B_c( \cdot, A^{-1}_s V_s(S)Q^{1/2} )\|^2_{\cL_2(X,Y)}~\txtd S\Big)^{p/2}\\
    &\leq  C \Big( \sum\limits_{k=1}^\infty \|Q^{1/2}_c e_k \|^2_X \Big)^{p/2} \E\sup\limits_{T\in[0,T_0]} \|A^{-1}_s V_s(T)\|^p_X.
\end{align*}
This proves the claim by Chebyshev's inequality given that $A^{-1}_s$ is a bounded operator on $X$ and~\eqref{e:boundVs} to bound $V_s$.

We now use again the It\^o-trick in order to  similarly replace 
\[U_s = -\varepsilon A_s^{-1}B_s(U_c,U_c) +\cO(\varepsilon^{2-}) \]
in $B_c(U_s,V_c)$. To this aim, as before we consider
\[ 
\txtd B_c(A_s^{-1}U_s,V_c)  = B_c(A_s^{-1}\txtd U_s,V_c) + B_c(A_s^{-1}U_s,\txtd V_c).
\] 
We have from~\eqref{ito:us} and the bounds from Theorem \ref{thm:approximation}
\[
\txtd U_s =  [\varepsilon^{-2}A_sU_s 
+ \varepsilon^{-2} B_s(U_c,U_c) + \cO(\varepsilon^{-1-})]\txtd T + \cO(\varepsilon^{-1}) \txtd W_s   
\]
and from~\eqref{linear:v} since $U=U_c+\varepsilon U_s$ and $V=V_c+V_s$
\[
\txtd V_c = [\nu \varepsilon^{-2} V_c + 2  B_c(U_s,V_c) + 2 \varepsilon^{-1} B_c(U_c,V_s ) + 2B_c(U_s,V_s) ]\txtd T.   
\]
Thus using all the bounds  on $U$ and $V$ and Lemma~\ref{s:integrals} for the  stochastic integral
\begin{align*}
\cO(\varepsilon^{0-}) &= \int_0^T B_c(A_s^{-1}\txtd U_s,V_c)~\txtd S + \int_0^T B_c(A_s^{-1}U_s,\txtd V_c)~\txtd S\\
&= \int_0^T B_c(  [\varepsilon^{-2}U_s 
+ A_s^{-1}[\varepsilon^{-2} B_s(U_c,U_c) + \cO(\varepsilon^{-1-})],V_c)\txtd S +  \int_0^T \cO(\varepsilon^{-1}) \txtd W_s(S),V_c) 
\\ & \quad + \int_0^T B_c(A_s^{-1}U_s, [\nu \varepsilon^{-2} V_c + 2B_c(U_s,V_c) + 2 \varepsilon^{-1} B_c(U_c,V_s ) + 2B_c(U_s,V_s)])~\txtd S\\
&= \varepsilon^{-2} \int_0^T B_c(U_s, V_c) \txtd S
+  \varepsilon^{-2} \int_0^T B_c(A_s^{-1} B_s(U_c,U_c),V_c)\txtd S  \\& \quad +2 \varepsilon^{-1}   \int_0^T B_c(A_s^{-1}U_s,  B_c(U_c,V_s ))~\txtd S+  \cO(\varepsilon^{-1-}).
\end{align*}
Now we used \eqref{e:boundVs} for the term involving $V_s$, which gives an additional $\varepsilon$ leading to 
\[  \int_0^T B_c(U_s,V_c)~\txtd S = - \int_0^T B_c(A^{-1}_s B_s(U_c,U_c),V_c)~\txtd S +\cO(\varepsilon^{1-}).  \]

This allows us to replace $U_s$ in $B_c(U_s, V_c)$ as stated above. We finally obtain on $[0,T_0]$
\begin{align*}
 V_c(T) & = V_c(0)+\int_0^T[\nu\varepsilon^{-2} V_c + 2\varepsilon^{-1}  B_c(U,V) ]~\txtd S\\
&=V_c(0)+\int_0^T[\nu\varepsilon^{-2} V_c + 2 B_c(U_s,V_c) + \varepsilon^{-1} B_c(U_c,V_s) + \cO(\varepsilon^{1-})]~\txtd S\\
&=V_c(0)+\int_0^T[\nu\varepsilon^{-2} V_c - 2 B_c(A^{-1}_s B_s(U_c,U_c),V_c)  - 4 B_c(U_c,A_s^{-1}B_s(V_c,U_c))]~\txtd S+ \cO(\varepsilon^{1-}).
\end{align*}

 \begin{remark}
    Note that we now have $B_c(U,V)=B_s(U_c,V_s) + \varepsilon B_c(V_s,U_s) + \varepsilon B_c(U_s,V_c)$ since $B_c(U_c,V_c)=0$ and all the other terms are higher order. In the {\bf 3rd step} the term $B_s(U_c,V_c)$ entered the computation.
 \end{remark}

Thus we have the following equation for $V_c$
\begin{align*}
\txtd V_c &= 
[\nu\varepsilon^{-2} V_c-2B_c(V_c,A_s^{-1}B_s(U_c,U_c)) - 4B_c(U_c,A_s^{-1}B_s(U_c,V_c))]~\txtd T + \txtd R_V\\
&=[\nu\varepsilon^{-2} V_c+D\cF_c(U_c)V_c]~\txtd T + \txtd R_V.
\end{align*}
with $R_V=\cO(\varepsilon^{1-})$. 

\paragraph{4th step}
We now compare $V_c$ to the solution $\varphi$ of the linearized amplitude equation~\eqref{ae} around an arbitrary solution $a$ given by
\[
\txtd \varphi = 
[\nu\varepsilon^{-2} \varphi +D\cF_c(a)\varphi]~\txtd T.
\]
We show that on $[T_\varepsilon, T]$
\[ \|V_c-\varphi\|=\cO(\varepsilon).  \] 
We have 
\[ \txtd V_c = [\nu\varepsilon^{-2} V_c + D\cF_c(U_c)V_c]~\txtd T +\txtd R_V \]
and 
\[
\txtd\tilde{\varphi} = [\nu\varepsilon^{-2} \tilde{\varphi} + D\cF_c(a)\tilde{\varphi}]~\txtd T +\txtd R_V, 
\]
for a small error term $R_V=\cO(\varepsilon^{1-})$. Taking the difference we get
\begin{align*}
    \txtd (V_c-\tilde{\varphi}) = [\nu\varepsilon^{-2} (V_c-\tilde{\varphi}) + D \cF_c(U_c) (V_c-\tilde{\varphi}) + (D\cF_c(U_c) -D\cF_c(a))\tilde{\varphi}]~\txtd T.
\end{align*}
Since $\langle D\cF_c(a) b, b \rangle\leq 0$ for all $a,b\in \cN$, we get
\begin{align*}
    \frac{1}{2}\partial_T \|V_c-\tilde{\varphi}\|^2 &=\nu \varepsilon^{-2} \|V_c-\tilde{\varphi}\|^2 +\langle D\cF_c(U_c) (V_c-\tilde{\varphi}), V_c-\tilde{\varphi}  \rangle +\langle [D\cF_c(U_c) -D\cF_c(a)]\tilde{\varphi},V_c-\tilde{\varphi} \rangle\\
    & \leq \nu \varepsilon^{-2} \|V_c-\tilde{\varphi}\|^2 +\frac{1}{2} \| D\cF_c(U)-D\cF_c(a)\|^2 \|\tilde{\varphi}\|^2 +\frac{1}{2}\|V_c-\tilde{\varphi}\|^2. 
 \end{align*}
 By Gronwall's lemma we get for some universal constants $c,C>0$ and $T\in[0,T_0]$ for that
 \[ \|V_c(T)-\tilde{\varphi}(T)\|^2 \leq c\| V_c(0)-\tilde{\varphi}(0)\|^2 + C \int_0^T e^{c(T-\tau)} \|D\cF_c(U_c(S))-D\cF_c(a(S))\|^2\|\tilde{\varphi}(S) \|^2~\txtd S.  \]
By Theorem~\ref{thm:approximation} we know that $U_c-a=\cO(\varepsilon^{1-})$ on $[0,T_0]$.~Using this together with the local Lipschitz continuity of $D\cF_c$
\[
  \| D\cF_c(a)-D\cF_c(U_c)\| \leq C\|a+U_c\|\|a-U_c\| 
\]
that can be easily verified, proves the statement.

 To finalize the proof we need a bound on 
\[
\sup_{[0,T]}\| \varphi-\tilde\varphi\| \leq C\sup_{[0,T]}\|R_V\|
\]
which is trivial using Gronwall's inequality since $a$ is $\cO(1)$ according to Lemma~\ref{o:ae} and $\cF_c$ is a cubic, thus $D\cF_c$ quadratic. As the equation for $\varphi$ is linear, this is exactly the proof that $\tilde\varphi$ is bounded by the error given by $R_V$.

\qed
\end{proof}



\subsection{Approximation of finite-time Lyapunov exponents}


We give a fairly general statement on the approximation, that does not require any structure of the equations, or any approximation result. These are highly needed when evaluating the constants $K_N$ and $K_X$ introduced below.

\begin{theorem}\label{thm:lyapappr}
Let $U$ be the solution of the SPDE on the slow time-scale $T$ as in Section~\ref{sec:linearization} and let $a$ be a given solution of the AE~\eqref{ae}. 
We denote by $V$ the linearization around $U$ and $\varphi$ the linearization of the amplitude equation~\eqref{ae} around $a$ such that  $\varphi(0)=P_cV(0)$. We denote by $\lambda^U_T$ the FTLE of the SPDE and by $\lambda^a_T$ the FTLE of the AE. 
We further denote 
\[K_X(T):=\sup\limits_{\|V(0)\|=1}\|V(T)-\varphi(T)\| 
\quad\text{and}\quad
K_\cN(T):= \sup\limits_{\|V(0)\|=1, V(0)\in\cN} \|V(T)-\varphi(T)\|. 
\]
Then 
\[
 -\frac{1}{T} C K_\cN(T) e^{-T\lambda^a_T} \leq 
\lambda^U_T - \lambda^a_T \leq   \frac{1}{T} K_X(T)e^{-T\lambda^a_T},  
\]
where the lower bound only holds if $K_\cN(T) e^{-T\lambda^a_T}< 1/2$.
\end{theorem}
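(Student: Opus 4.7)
The proof is a triangle-inequality comparison; all the analytic substance has been absorbed into the constants $K_X(T)$ and $K_\cN(T)$. The key observation is an asymmetry between the two directions of the inequality: for the upper bound one must control the perturbation of $V(T)$ over every unit initial datum in $X$, while for the lower bound a single well-chosen direction suffices, and that direction can always be taken inside $\cN$, where the supremum defining $\lambda^a_T$ is attained.

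For the upper bound I would fix an arbitrary $V(0)\in X$ with $\|V(0)\|=1$ and let $\varphi$ solve the linearised AE with $\varphi(0):=P_cV(0)$. Since $A$ is self-adjoint, $P_c$ is the orthogonal projection onto $\cN$, hence $\|\varphi(0)\|\leq 1$, and by the definition of the AE FTLE, $\|\varphi(T)\|\leq e^{T\lambda^a_T}$. The triangle inequality and the definition of $K_X(T)$ yield
\[
\|V(T)\|\;\leq\;\|\varphi(T)\|+\|V(T)-\varphi(T)\|\;\leq\; e^{T\lambda^a_T}+K_X(T).
\]
Taking the supremum over unit $V(0)$, applying $\tfrac{1}{T}\log(\cdot)$, and invoking $\log(1+x)\leq x$ for $x\geq 0$ delivers the stated upper bound.

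For the lower bound the point is to pick an optimiser. Since $\cN$ is finite dimensional and the linearised AE is a linear ODE on $\cN$, the supremum $e^{T\lambda^a_T}$ is attained at some unit vector $\varphi_0^{\ast}\in\cN$. Taking $V(0):=\varphi_0^{\ast}\in\cN$, one has $\|V(0)\|=1$ in $X$ and $\varphi(0)=P_cV(0)=\varphi_0^{\ast}$, so the reverse triangle inequality together with the definition of $K_\cN(T)$ gives
\[
\|V(T)\|\;\geq\;\|\varphi(T)\|-\|V(T)-\varphi(T)\|\;\geq\; e^{T\lambda^a_T}-K_\cN(T),
\]
whence $e^{T\lambda^U_T}\geq e^{T\lambda^a_T}-K_\cN(T)$. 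Under the smallness condition $K_\cN(T)e^{-T\lambda^a_T}<1/2$, the elementary inequality $\log(1-x)\geq -2x$ valid on $[0,1/2]$ produces the lower bound with $C=2$; this is also where the smallness condition is essential, since otherwise $\log$ is not defined.

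I do not expect any genuine obstacle: the only place where anything non-trivial happens is the correct choice of initial datum in each direction, and once that is made the argument is purely arithmetic. This is precisely what makes the theorem useful as a black box, since in any concrete application one only has to estimate $K_X(T)$ and $K_\cN(T)$, both of which are delivered by Theorem~\ref{thm:linear} in our setting.
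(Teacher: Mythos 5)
Your proposal is correct and follows essentially the same route as the paper's proof: triangle inequality against $\varphi$ with $\varphi(0)=P_cV(0)$ for the upper bound (using that $P_c$ is an orthogonal projection of norm one and $\log(1+x)\leq x$), and a near-optimal initial datum in $\cN$ together with $\log(1-x)\geq -Cx$ on $[0,1/2]$ for the lower bound. The only cosmetic difference is that the paper uses a maximizing sequence in $\cN$ rather than an attained maximizer, and leaves the constant as $C_\delta=1/(1-\delta)$ instead of fixing $C=2$.
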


\begin{proof}

For the upper bound we use the definition of the FTLE to get that
   \begin{align*}
       \lambda^U_T
    &=\sup\limits_{\|V(0)\|=1} \frac{1}{T} \log (\|V(T)\|)\\
       & \leq \sup\limits_{\|V(0)\|=1} \frac{1}{T} \log (\|V(T)-\varphi(T)\| +\|\varphi(T)\| ). 
   \end{align*}

Furthermore, since  
\[ e^{T\lambda^a_T} = \sup\limits_{\|\varphi(0)\|=1} \|\varphi(T)\| \]
and using that $\varphi$ solves a linear SDE, we infer that  
\[ 
\|\varphi(T)\| \leq \|\varphi(0)\|e^{T\lambda^a_T}.
\]
Thus 
\[
 \lambda^U_T\leq  \frac{1}{T} \log ( K_X(T) +\|P_c\| e^{T\lambda^a_T}  )=\frac{1}{T} \log ( e^{T\lambda^a_T} (\|P_c\| + K_X(T) e^{-T \lambda^a_T})   ).
\]
In conclusion,
\begin{align*}
 \lambda^U_T & \leq  \lambda^a_T  + \frac{1}{T} \log \Big( K_X(T)e^{-T\lambda^a_T}  +\|P_c\|   \Big) \\ 
  & \leq  \lambda^a_T  + \frac{1}{T} \log (\|P_c\|) + \frac{1}{T} K_X(T)e^{-T\lambda^a_T} . 
\end{align*}
Note that by our assumptions $P_c$ is an orthogonal projection, thus its norm is $1$.

For the lower bound, we fix a maximizing sequence of $\varphi_n(0)=V_n(0) \in \cN $ such that 
\[
\|\varphi_n(T)\| \to e^{T\lambda^a_T} \qquad \text{for } n\to\infty.
\]
Then 
  \begin{align*}
       \lambda^U_T= \frac{1}{T} \sup\limits_{\|V(0)\|=1} \log (\|V(T)\|)
    & \geq \frac{1}{T} \log (\|V_n(T)\|)\\
    & \geq  \frac{1}{T} \log (\|\varphi_n(T)\| -K_\cN(T) )
   \end{align*}
Thus in the limit
 \begin{align*}
\lambda^U_T  
&\geq  \frac{1}{T} \log (e^{T\lambda^a_T} -K_\cN(T) ) \\
&= \lambda^a_T + \frac{1}{T} \log (1-K_\cN(T) e^{-T\lambda^a_T} ) .
\end{align*}
Using the inequality $\ln(1-x)\geq -c x$ 
for $0\leq x\leq 1/2$ we get
provided $K_\cN(T) e^{-T\lambda^a_T}\leq\delta<1/2$, 
\[
\lambda^U_T \geq \lambda^a_T - \frac{1}{T}C_\delta K_\cN(T) e^{-T\lambda^a_T}.
\]  
Here
$C_\delta =\inf\{ 1/(1+\xi)\ :\  \xi \in (-\delta,0) \} = 1/(1-\delta)$. This proves the lower bound.

\end{proof}

\begin{remark}\label{rem:K} Let us comment on the setting of the AE equation
\begin{itemize}
    \item [1)] Note that the AE~\eqref{ae} does not depend on $\varepsilon$, so in particular, $\lambda^a_T$ does not depend on $\varepsilon$ as required.
    \item [2)] In applications, the upper bound in the previous theorem is used to prove stability and the lower bound is used for instability. 
    \item [3)] Note that 
    \[ \|V(T)-\varphi(T)\| \leq \|V_s(T)\| +\| V_c(T)-\varphi(T)\|, \]
    but
    \[\sup\limits_{[T_\varepsilon,T]} \|V_s(T)\|=\cO(\varepsilon). \]
    This means that provided the initial conditions are $u(0)= \varepsilon a_0 + \cO(\varepsilon^2)$ we have 
       $K_{\cN}=\cO(\varepsilon^{1-})$ but for $K_X$ since $\|V_s(T)\|=\cO(\varepsilon)$ for $T\in[T_\varepsilon,T_0]$  this is not the case. 
       To be more precise, using \eqref{e:boundVs} we have for an arbitrarily small $\kappa>0$
       \[
       K_X(T) \leq C e^{-\mu T\varepsilon^{-2}} + C \varepsilon^{1-\kappa}.
       \]
\end{itemize}

\end{remark}

\subsection{Main result}

Let us finally rescale the result  to the scaling of AE.

\begin{theorem}\label{scale:ftle}
    Let $u$ be the solution of the SPDE~\eqref{spde} on the original time scale with $u(0)=u_0$ and $\varphi$ be the linearization of the amplitude equation~\eqref{ae} around $a$ with $a(0)=a_0$ such that $\varphi(0)=P_cV(0)$.
Then we get the following approximation of the FTLEs
\begin{align}\label{approx:ftle}
   -\frac{1}{T}C \varepsilon^2 K_{\cN}(T) e^{-T\lambda^a_T}(a_0) \leq \lambda^u_{T\varepsilon^{-2}}(u_0) -\varepsilon^2 \lambda^a_T(a_0)  \leq \frac{\varepsilon^2}{T}K_X(T)
e^{-T\lambda^a_T(a_0)}
\end{align}
where the lower bound only holds if $K_\cN(T) e^{-T\lambda^a_T(a_0)}< 1/2$.
\end{theorem}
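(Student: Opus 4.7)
The plan is to derive Theorem~\ref{scale:ftle} from Theorem~\ref{thm:lyapappr} purely by unwinding the time-rescaling between $t$ and $T=\varepsilon^2 t$ used throughout Section~\ref{sec:linearization}. The only genuine work is to check that the FTLE on the original scale and the FTLE on the slow scale differ exactly by the factor $\varepsilon^2$, and that the constants $K_\cN(T)$ and $K_X(T)$ that appear in Theorem~\ref{thm:lyapappr} transfer through without change, because they are already defined in terms of slow-scale quantities.

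First I would observe that the linearization~\eqref{linearization} is a linear equation, so the scaling $v(t)=\varepsilon V(\varepsilon^2 t)$ combined with $V(0)=\varepsilon^{-1}v(0)$ gives a bijection on initial conditions that preserves ratios of norms: for every $v_0\neq 0$,
\[
\frac{\|v(T\varepsilon^{-2})\|_X}{\|v(0)\|_X}
=\frac{\|\varepsilon V(T)\|_X}{\|\varepsilon V(0)\|_X}
=\frac{\|V(T)\|_X}{\|V(0)\|_X}.
\]
Taking the supremum over nonzero $v_0$ (equivalently over nonzero $V_0$) and using Remark~\ref{propLyap} yields
\[
\|U_{u_0}(T\varepsilon^{-2})\|_{\cL(X)}=\|\tilde U_{U_0}(T)\|_{\cL(X)},
\]
where $\tilde U$ denotes the slow-scale linearized solution operator associated with \eqref{linear:v}. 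Applying $\frac{1}{T\varepsilon^{-2}}\ln(\cdot)$ to both sides and using the definition~\eqref{ftle} of the FTLE on both time-scales gives the key identity
\[
\lambda^u_{T\varepsilon^{-2}}(u_0)=\varepsilon^2\,\lambda^U_T(U_0).
\]

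With that identity in hand, I would invoke Theorem~\ref{thm:lyapappr} applied to $U$ (with initial datum $U_0$ satisfying $\|U_c(0)-a_0\|\le c\varepsilon$ so that the hypotheses of Theorem~\ref{thm:approximation} and Theorem~\ref{thm:linear} are in force) and to the linearization $\varphi$ of the amplitude equation around $a$ with $\varphi(0)=P_cV(0)$. This gives
\[
-\frac{C}{T}\,K_\cN(T)\,e^{-T\lambda^a_T(a_0)}
\ \le\ \lambda^U_T(U_0)-\lambda^a_T(a_0)\ \le\ \frac{1}{T}\,K_X(T)\,e^{-T\lambda^a_T(a_0)},
\]
valid under the stated smallness assumption $K_\cN(T)e^{-T\lambda^a_T(a_0)}<1/2$ for the lower bound. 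Multiplying through by $\varepsilon^2$ and substituting $\varepsilon^2\lambda^U_T(U_0)=\lambda^u_{T\varepsilon^{-2}}(u_0)$ produces \eqref{approx:ftle} verbatim.

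There is no real obstacle here; the only step requiring care is matching the initial conditions on the two scales so that the pair $(U,V)$ and the pair $(a,\varphi)$ feeding Theorem~\ref{thm:lyapappr} are the rescaled counterparts of $(u,v)$ and of the given amplitude-equation solution, and ensuring that the constants $K_\cN(T)$ and $K_X(T)$ are computed on the slow scale (as already defined in Theorem~\ref{thm:lyapappr}) so the rescaling only multiplies the outside prefactor $1/T$ by $\varepsilon^2$. The condition $K_\cN(T)e^{-T\lambda^a_T(a_0)}<1/2$ is a pure slow-scale condition and is unaffected by the rescaling, so it carries over without modification.
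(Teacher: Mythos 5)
Your proposal is correct and follows essentially the same route as the paper: both establish the identity $\lambda^u_{T\varepsilon^{-2}}(u_0)=\varepsilon^2\lambda^U_T(U_0)$ from the linearity of the variational equation under the rescaling $v(t)=\varepsilon V(\varepsilon^2 t)$, and then multiply the bounds of Theorem~\ref{thm:lyapappr} by $\varepsilon^2$. Your observation that the ratio $\|v(T\varepsilon^{-2})\|/\|v(0)\|=\|V(T)\|/\|V(0)\|$ is preserved is exactly the content of the paper's supremum computation, just stated more transparently.
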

\begin{proof}
    We recall that we consider the slow time scale $t=T\varepsilon^{-2}$. Therefore, rescaling the quantities we obtain the following relation between the FTLE for $u$ on the original time scale and for the FTLE for $u$ on the slow time scale
\begin{align*}
    \lambda_{T\varepsilon^{-2}}^u = \lambda_{T\varepsilon^{-2}}^{\varepsilon U(\epsilon^2 \cdot)}
    &=\frac{\varepsilon^2}{T}\sup\limits_{\|v(0)\|=1}\log (\|v(T/\varepsilon^2)\|)\\
    & = \frac{\varepsilon^3}{T}\sup\limits_{\|V(0)\|=\varepsilon^{-1}} \log \|V(T)\|\\
    &=\frac{\varepsilon^2}{T} \sup\limits_{\|V(0)\|=1} \log \|V(T)\| = \varepsilon^2\lambda_T^U.
\end{align*}
The statement follows from Theorem \ref{thm:lyapappr}.
\end{proof}\\


From Remark \ref{rem:K} and Theorem \ref{scale:ftle} we obtain immediately 

\begin{corollary}\label{corollar:ftle}
Fix $T_0>0$, $\alpha\in(0,1)$, and small $\kappa>0$. Moreover, consider initial conditions $a_0=a_0(\omega)=\cO(1)$ for the AE \eqref{ae}  and $u_0=\varepsilon a_0 +\varepsilon^2\psi_0$ with $\psi_0\in\cS$ and $\psi =\cO(1)$ for the SPDE \eqref{spde}.  For any probability $p\in(0,1)$ there is a set $\Omega_p$ with $\P(\Omega_p)>p$, and $\varepsilon_0>0$ and a constant $C>0$ such that for all $T\in[\varepsilon^\alpha,T_0]$
  \[
   | \lambda^{u}_{T\varepsilon^{-2}} (\varepsilon a_0 + \varepsilon^2\psi_0) -\varepsilon^2 \lambda^{a}_T(a_0) | \leq C  \varepsilon^{3-\alpha-\kappa} 
 \]
 for all $\varepsilon\in(0,\varepsilon_0]$.
\end{corollary}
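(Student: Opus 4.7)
The plan is to plug the bounds on $K_{\cN}(T)$ and $K_X(T)$ obtained in Remark \ref{rem:K} (which summarizes Theorem \ref{thm:linear}) directly into the two-sided estimate of Theorem \ref{scale:ftle}, and then handle the prefactor $e^{-T\lambda^a_T(a_0)}$ by a soft argument on the exceptional set. Throughout we work on the set $\Omega_p$ on which the conclusions of Theorem \ref{thm:approximation} and Theorem \ref{thm:linear} hold; by shrinking $\Omega_p$ if necessary we also assume that $\|a(0)\|\leq C_p$ and $\sup_{[0,T_0]}\|a(T)\|\leq C_p$ via Lemma \ref{o:ae}.

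First, I would control $e^{-T\lambda^a_T(a_0)}$ uniformly. Since $\varphi$ solves a linear equation with bounded coefficients (the operator $\nu\varepsilon^{-2}\mathrm{Id}+D\cF_c(a)$ has operator norm bounded by $C(1+\|a\|^2)$, which is $\cO(1)$ on $\Omega_p$), a Gronwall argument in the backward direction, together with the trivial lower bound $\|\varphi(T)\|\geq \|\varphi(0)\|e^{-T\cdot L}$ with $L=\sup_{[0,T_0]}\|D\cF_c(a)-\nu\varepsilon^{-2}\mathrm{Id}\|_{\cL(\cN)}$, yields $\lambda^a_T(a_0)\geq -L$, hence $e^{-T\lambda^a_T(a_0)}\leq e^{L T_0}\leq K_p$ uniformly in $T\in[\varepsilon^\alpha,T_0]$ and $\varepsilon\in(0,\varepsilon_0]$.

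Next, Remark \ref{rem:K}(3) gives $K_\cN(T)\leq C\varepsilon^{1-\kappa}$ and $K_X(T)\leq Ce^{-\mu T\varepsilon^{-2}}+C\varepsilon^{1-\kappa}$. For $T\geq \varepsilon^\alpha$ with $\alpha<1$ we have $T\varepsilon^{-2}\geq \varepsilon^{\alpha-2}\to\infty$ as $\varepsilon\to 0$, so $e^{-\mu T\varepsilon^{-2}}$ is smaller than any polynomial power of $\varepsilon$ and in particular dominated by $\varepsilon^{1-\kappa}$ for $\varepsilon$ small enough. Therefore $K_X(T)\leq 2C\varepsilon^{1-\kappa}$ on $[\varepsilon^\alpha,T_0]$. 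The same uniform bound, times $K_p$, also makes $K_\cN(T)e^{-T\lambda^a_T(a_0)}<1/2$ hold for $\varepsilon\leq\varepsilon_0$ small enough, so the lower bound in Theorem \ref{scale:ftle} is applicable.

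Inserting these bounds into \eqref{approx:ftle} and using $1/T\leq \varepsilon^{-\alpha}$, both sides are controlled by
\[
\frac{\varepsilon^2}{T}\cdot C\varepsilon^{1-\kappa}\cdot K_p \;\leq\; C'\, \varepsilon^{3-\alpha-\kappa},
\]
which is the claimed bound. The main (minor) obstacle is the step verifying the uniform lower bound for $\lambda^a_T(a_0)$ on $\Omega_p$; this is purely soft and uses only that $\|a\|_{\cN}=\cO(1)$ on $[0,T_0]$ and that $D\cF_c$ is quadratic. Everything else is a direct substitution, with the threshold $T\geq\varepsilon^\alpha$ serving only to kill the transient contribution $e^{-\mu T\varepsilon^{-2}}$ in $K_X$ originating from the initial data on $\cS$.
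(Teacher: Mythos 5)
Your proposal is correct and follows exactly the route the paper intends: the authors state that the corollary follows ``immediately'' from Remark~\ref{rem:K} and Theorem~\ref{scale:ftle}, and you have simply filled in the substitution of $K_\cN(T)=\cO(\varepsilon^{1-\kappa})$ and $K_X(T)\leq Ce^{-\mu T\varepsilon^{-2}}+C\varepsilon^{1-\kappa}$ together with $1/T\leq\varepsilon^{-\alpha}$. Your explicit uniform bound $e^{-T\lambda^a_T(a_0)}\leq K_p$ via backward Gronwall on the linear equation for $\varphi$ is a detail the paper leaves implicit, and it is a worthwhile addition since without it the prefactor is not obviously controlled.
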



\section{Example: Bifurcations for one-dimensional kernel}\label{sec:bif}

    We emphasize that all approximating SDEs have a stable cubic nonlinearity, so we can use well-known results for the FTLEs obtained in~\cite{BlNe:23}. 

To simplify the presentation of the results we assume throughout the section that $\text{dim}(\cN)=1$. Now according to Assumption~\ref{b}
\[\cF_c(a)=-B_c(a,A^{-1}_sB_s(a,a))=-ca^3
\] 
for some constant $c$ if we identify  $\cN\equiv \R$ and know that $\cN=\text{span}\{e\}$. To 
be more precise
\[ \langle F(a), e\rangle = -\xi^3 \langle F(e), e\rangle<0 \]
for $a=\xi \cdot e$. Analogously we get for the derivative 
\[ \langle \txtD F(a)e , e \rangle =- 3 \xi^2 \langle \txtD F(e)e,e \rangle <0. \]

\subsection{Case $1\gg\sigma\approx\nu >0$}

\begin{theorem}{\em(Instability)}
For $\alpha\in(0,1/2)$, $T_0>0$, $a_0\in\cN$ and $\psi\in\cS$ we have
\[
\lambda^u_{T\nu^{-1}} (\sqrt{\nu} a_0+ \nu^2\psi) >0
\]
 with positive probability
for all $T\in [\nu^{\alpha}, T_0]$ and $\nu$ sufficiently small.
\end{theorem}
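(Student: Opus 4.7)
My plan is to rescale with $\varepsilon:=\sqrt{\nu}$, reduce the SPDE to the one-dimensional amplitude equation via Corollary~\ref{corollar:ftle}, and then exhibit a positive-probability event on which the FTLE of the AE is bounded below by a positive constant; the $\varepsilon^2$-prefactor in the approximation then beats the $\cO(\varepsilon^{3-2\alpha-\kappa})$ error coming from the reduction.

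First I set $\varepsilon:=\sqrt{\nu}$, so that $\nu\varepsilon^{-2}=1$ and $\sigma\varepsilon^{-2}=\cO(1)$ by the hypothesis $\sigma\approx\nu$. The initial condition $u_0=\sqrt{\nu}\,a_0+\nu^2\psi$ fits the framework of Corollary~\ref{corollar:ftle}, since it can be rewritten as $u_0=\varepsilon a_0+\varepsilon^2\psi_0$ with $\psi_0:=\varepsilon^2\psi=\cO(1)$. Taking $\tilde\alpha:=2\alpha\in(0,1)$ (admissible because $\alpha\in(0,1/2)$), the corollary yields, on a set $\Omega_p$ with $\P(\Omega_p)\geq p$ for any $p<1$ to be fixed later,
\[\bigl|\lambda^u_{T\nu^{-1}}(u_0)-\varepsilon^2\lambda^a_T(a_0)\bigr|\leq C\varepsilon^{3-2\alpha-\kappa}\qquad\text{for all }T\in[\nu^\alpha,T_0].\]

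Next I exploit $\dim\cN=1$. With $\cN=\text{span}\{e\}\equiv\R$, the discussion at the beginning of Section~\ref{sec:bif} gives $\cF_c(a)=-ca^3$ with $c>0$, so the (rescaled) AE reduces to the $\R$-valued SDE
\[\txtd a=[a-2ca^3]\,\txtd T+\tilde\sigma\,\txtd\tilde W,\qquad \tilde\sigma>0,\]
whose linearization along $a(\cdot)$ is the explicit scalar exponential $\varphi(T)=\varphi(0)\exp\bigl(\int_0^T(1-6c\,a(S)^2)\,\txtd S\bigr)$, so that
\[\lambda^a_T(a_0)=\frac{1}{T}\int_0^T\bigl(1-6c\,a(S)^2\bigr)\,\txtd S.\]
Hence on the event $\cE_T:=\{\sup_{S\in[0,T]}|a(S)|\leq (12c)^{-1/2}\}$ one has $\lambda^a_T(a_0)\geq 1/2$. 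For any fixed $T\in(0,T_0]$, the non-degeneracy of the noise (support theorem, or a direct Girsanov/small-ball argument) gives $\P(\cE_T)>0$; the one-dimensional FTLE analysis of~\cite{BlNe:23} allows one to extract a uniform lower bound $\P(\cE_T)\geq p_0>0$ valid for all $T\in[\nu^\alpha,T_0]$ and $\nu$ small.

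Finally, choosing $p>1-p_0$ in the approximation step makes $\Omega_p\cap\{\lambda^a_T(a_0)\geq 1/2\}$ a set of positive probability on which
\[\lambda^u_{T\nu^{-1}}(u_0)\geq \tfrac12\varepsilon^2-C\varepsilon^{3-2\alpha-\kappa}=\varepsilon^2\bigl(\tfrac12-C\varepsilon^{1-2\alpha-\kappa}\bigr),\]
which is strictly positive for $\nu$ sufficiently small, since $1-2\alpha>0$ and $\kappa$ may be chosen arbitrarily small. The main obstacle is the uniform-in-$T$ lower bound on $\P(\cE_T)$: for $T\sim\nu^\alpha$ a short-time Gaussian estimate around $a_0$ suffices, while for $T$ close to $T_0$ one must control the probability that the diffusion reaches the wells $\pm(2c)^{-1/2}$, where $1-6ca^2=-2$ would destroy positivity; this is precisely the regime handled by the 1D bifurcation results of~\cite{BlNe:23}, which I would invoke as a black box.
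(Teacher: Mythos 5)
Your proposal is correct and follows essentially the same route as the paper: rescale with $\varepsilon=\sqrt{\nu}$, bound $\lambda^a_T(a_0)\geq \tfrac12$ from below on the event that $a$ stays in a small neighbourhood of the origin (so that the explicit scalar linearization $\varphi(T)=\varphi(0)\exp(\int_0^T(1-6c\,a^2)\,\txtd S)$ stays expanding), and then let the $\varepsilon^2$ prefactor beat the approximation error from Corollary~\ref{corollar:ftle}. Two small remarks: your bookkeeping with $\tilde\alpha=2\alpha$ is actually more faithful to the stated range $T\in[\nu^\alpha,T_0]$ than the paper's own proof (and it is exactly what forces $\alpha<1/2$), and the ``main obstacle'' you flag at the end is not one — since $\cE_{T_0}\subseteq\cE_T$ for all $T\leq T_0$, a single application of the support theorem on $[0,T_0]$ gives the uniform lower bound $\P(\cE_T)\geq\P(\cE_{T_0})>0$, with no need to control excursions to the wells or to invoke the 1D machinery of~\cite{BlNe:23} as a black box.
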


\begin{proof}
    Setting $\nu =\varepsilon^2$ we get the amplitude equation
\[ \txtd a = [a +2 \cF_c(a)  ]~\txtd T +\frac{\sigma}{\nu}~\txtd \beta_{\sqrt{\nu}}(T), \]
where  $(\beta_{\sqrt{\nu}}(T))_{T\geq 0}$ is a rescaled Brownian motion. Furthermore we get the linearization
\[
\txtd \varphi = 
[ \varphi -{2}B_c(\varphi,A_s^{-1}B_s(a,a))-{4}B_c(a,A_s^{-1}B_s(a,\varphi))]~\txtd T.
\]

Therefore we get 
\begin{align*}
    \lambda^{a}_T(a_0) =\frac{1}{T} \log \|\varphi(T)\| \geq \frac{1}{T} \log \exp \Big( T- 6 \int_0^T a^2(S)~\txtd S  \Big) \geq 1 - 6\delta^2.
\end{align*}
Choosing $\delta:=\frac{1}{4}$ we get that $\lambda^{a}_T(a_0)>0$ on $\Omega_0$. 
 Thus we can conclude using Corollary~\ref{corollar:ftle} that on $\Omega_0\cap \Omega_p$ we have for $T\in [\varepsilon^{\alpha},T_0]$
\begin{align*}
    \lambda^{u}_{T\varepsilon^{-2}}(\varepsilon a_0 +\varepsilon^2 \psi ) \geq \varepsilon^2 \lambda^{a}_T(a_0)-C\varepsilon^{3-\alpha-\kappa}. 
\end{align*}
Since $\lambda^{a}_T(a_0)$ is positive on the set $\Omega_0\cap \Omega_p$ we get the positivity of $\lambda^{u}_{T\varepsilon^{-2}}(\varepsilon a_0)$ on $\Omega_0\cap \Omega_p$  choosing $C\varepsilon^{3-\alpha}$ small. 

Note finally that $\Omega_0\cap \Omega_p$ is a set of positive probability if $p$ is sufficiently close to $1$, for example larger than $1-\P(\Omega_0)$.
\end{proof}

\subsection{Case $1\gg\sigma\approx -\nu >0$}

\begin{theorem}{\em(Stability)}
For $\alpha\in(0,1/2)$ $T_0$, $a_0\in\cN$ and $\psi\in\cS$ we have
\[
\lambda^u_{T|\nu|^{-1}} (\sqrt{|\nu|} a_0+ |\nu|\psi) <0
\]
 with probability almost one
for all $T\in [|\nu|^{\alpha}, T_0]$ and $|\nu|$ sufficiently small.
\end{theorem}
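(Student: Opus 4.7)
\emph{(Proof proposal.)} The plan is to mirror the argument used for the instability theorem above, exploiting the fact that below threshold ($\nu<0$) the linearization of the amplitude equation is uniformly dissipative, so that the relevant FTLE of the AE is a deterministic quantity bounded away from zero by a negative constant.

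First, I set $|\nu| = \varepsilon^2$, so $\nu\varepsilon^{-2}=-1$. The amplitude equation becomes
\[
\txtd a = [-a + 2\cF_c(a)]~\txtd T + \tfrac{\sigma}{|\nu|}~\txtd\beta_{\sqrt{|\nu|}}(T),
\]
and its linearization around an arbitrary solution $a$ reduces, in the one-dimensional kernel using $\cF_c(a) = -c a^3$ with $c>0$, to the scalar linear ODE
\[
\txtd \varphi = -\bigl(1 + 6c\, a^2(T)\bigr)\varphi ~\txtd T.
\]
Integrating pathwise yields the deterministic bound
\[
\lambda^{a}_T(a_0) = -1 - \frac{6c}{T}\int_0^T a^2(S)\,\txtd S \leq -1
\]
with probability one, uniformly in $T>0$ and $a_0$. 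In contrast to the instability case, no auxiliary good event on the noise is needed because the linear part $-1$ already dominates.

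Second, I apply Corollary~\ref{corollar:ftle} with initial data $u_0 = \varepsilon a_0 + \varepsilon^2 \psi = \sqrt{|\nu|}\,a_0 + |\nu|\,\psi$. On the set $\Omega_p$ of probability at least $p$, for all $T\in[\varepsilon^\alpha,T_0]$,
\[
\lambda^u_{T\varepsilon^{-2}}(\varepsilon a_0 + \varepsilon^2 \psi) \leq \varepsilon^2 \lambda^{a}_T(a_0) + C\varepsilon^{3-\alpha-\kappa} \leq -\varepsilon^2 + C\varepsilon^{3-\alpha-\kappa}.
\]
Since $\alpha < 1/2$ implies $3-\alpha-\kappa > 2$ for $\kappa$ small, the right-hand side is strictly negative for $\varepsilon\in(0,\varepsilon_0]$, equivalently for $|\nu|$ sufficiently small. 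Taking $p$ arbitrarily close to $1$ upgrades the conclusion to ``probability almost one''.

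No serious obstacle is expected: both ingredients (the pathwise bound $\lambda^a_T\le -1$ and the transfer estimate in Corollary~\ref{corollar:ftle}) are already in hand. The only points that need care are (i) that the factor $2$ in front of $\cF_c$ in the AE yields the coefficient $6c\,a^2$ (rather than $3c\,a^2$) in the linearization, and (ii) that the admissible exponent $3-\alpha-\kappa$ does exceed $2$ under $\alpha<1/2$, so that the error from Corollary~\ref{corollar:ftle} is genuinely absorbed by the dissipative term $-\varepsilon^2$.
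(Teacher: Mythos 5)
Your proposal is correct and follows essentially the same route as the paper: set $|\nu|=\varepsilon^2$, use the pathwise dissipative bound $\lambda^a_T\le -1$ from the linearized amplitude equation, and transfer it to the SPDE via Corollary~\ref{corollar:ftle}, noting that $3-\alpha-\kappa>2$ absorbs the error term. Your explicit verification of that exponent inequality is a small tidiness improvement over the paper's "choosing $C\varepsilon^{3-\alpha-\kappa}$ small," but the argument is the same.
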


\begin{proof}
    Setting $|\nu| =\varepsilon^2$ we get again  the amplitude equation
\[ \txtd a = [-a +2 \cF_c(a)  ]~\txtd T +\frac{\sigma}{\nu}~\txtd \beta_{\sqrt{|\nu|}}(T), \]
where  $(\beta_{\sqrt{|\nu|}}(T))_{T\geq 0}$ is a rescaled Brownian motion. Furthermore we obtain the linearization
\[
\txtd \varphi = 
[ -\varphi -{2}B_c(\varphi,A_s^{-1}B_s(a,a))-{4}B_c(a,A_s^{-1}B_s(a,\varphi))]~\txtd T.
\]
We get 
\begin{align*}
    \lambda^{a}_T(a_0) \leq \frac{1}{T} \log \exp \Big( -T- 6 \int_0^T a^2(S)~\txtd S  \Big) \leq -1. 
\end{align*}
 Thus we can conclude using Corollary~\ref{corollar:ftle} that on $ \Omega_p$ we have for $T\in [\varepsilon^{\alpha},T_0]$
\begin{align*}
    \lambda^{u}_{T\varepsilon^{-2}}(\varepsilon a_0+\varepsilon^2 \psi)\leq \varepsilon^2\lambda^{a}_T(a_0) +C\varepsilon^{3-\alpha-\kappa}.
\end{align*}
Since $\lambda^{a}_T(a_0)$ is almost surely negative we get the negativity of $\lambda^{u}_{T\varepsilon^{-2}}(\varepsilon a_0)$ on $\Omega_p$  choosing $C\varepsilon^{3-\alpha-\kappa}$ small.

\end{proof}

\subsection{Case $1 \gg |\nu| \gg \sigma$}
 Here again we fix $\varepsilon^2=|\nu|$ the amplitude equation is given by the ODE
 \begin{align}\label{ae:ode} \txtd a = [\sgn(\nu)a +2\cF_c(a) ]~\txtd T +  \frac{\sigma}{\nu}~\txtd \beta_{\sqrt{|\nu|}}(T).
 \end{align}

We have a small noise term and could follow the results of the previous two sections.
Alternatively, we can 
use large deviations principles (LDP) for FTLEs \cite{BBBE:25}, so compute FTLEs for the ODE 
\[
\txtd a = [\sgn(\nu)a +2\cF_c(a) ]~\txtd T
\]
and transfer it to the SDE by LDP. As we can rely on the deterministic amplitude equation (neglecting the small noise term), we do not need the support theorem and could prove a result for instability or stability (depending on the sign of $\nu$) also with probability almost one, depending on the initial condition.
We refrain from giving more details here.

\subsection{Case $1 \gg  \sigma\gg|\nu|$}

Here we set $\sigma=\varepsilon^2$ and the amplitude equation is given by the SDE
 \begin{align}
 \txtd a = [\frac{\nu}{\sigma}a +2\cF_c(a) ]~\txtd T + \txtd \beta_{\sqrt{\sigma}}(T).
 \end{align}
For $\nu=0$ we could use either Birkhoff's ergodic theorem \cite[Lemma 4.4]{BlNe:23} if the initial datum $a_0(\omega)$ is distributed according to the the invariant measure of the AE to show that $\lambda_T^a<0$ with high probability. For general initial data $a_0$ we can rely on \cite[Lemma 4.2]{BB:25} to show that the solution of the AE does not spend too much time in zero from which we can infer again that $\lambda_T^a<0$ with high probability.~In both cases we can carry over this stability to the SPDE using Corollary \ref{corollar:ftle}.

For the case $\nu\not=0$ we need an approximation result similar to Theorem \ref{thm:approximation} that relates results of $\nu=0$ to $\nu\neq 0$ but small, which we will not state here.

\section{Examples}\label{sec:app}

\subsection{Burgers equation}\label{sec:burgers}
We consider the Burgers equation  subject to Dirichlet boundary condition on $[0,\pi]$ given by
\begin{equation}\label{burgers}
    \partial_t u = (\partial^2_x +1) u +  \nu u + u \partial_x u +\sigma \partial_t W.
\end{equation}

In this case $X=L^2([0,\pi])$ and we have the orthonormal basis $e_k(x)=\sqrt{\frac{2}{\pi}}\sin (k x)$ of eigenfunctions of $A=\partial_x^2+1$, corresponding to the eigenvalues $\lambda_k=k^2-1$ for $k\geq 1$  and $\cN=\text{span}\{\sin x\}$.
The statements of Assumption \ref{a} are easy to verify for $A$ in $X$ and $Y=H^{1/4}([0,\pi])$.

The quadratic nonlinear term is given by
\[ B(u,v)=\frac{1}{2}\partial_x(uv). \]
On $\cN$ we immediately verify that for $u=c\sin x$ for some $c>0$
\[ P_c B(u,u)=\frac{1}{2}c^2 P_c(\partial_x \sin^2 x)=\sin x \cos x =0. \]
Furthermore we have
\begin{align*}
    2\|B(u,v)\|_{H^{-1}} &=\|\partial_x (uv)\|_{H^{-1}} \leq \|u v\|_{L^2}\\
    & \leq \|u\|_{L^4} \|v\|_{L^4}\leq C \|u\|_{H^{1/4}} \|v\|_{H^{1/4}},
\end{align*}
using the embedding $H^{1/4}([0,\pi])\hookrightarrow L^4([0,\pi])$. Therefore, Assumption~\ref{b} is satisfied for $X=L^2([0,\pi])$ and $Y=H^{1/4}([0,\pi])$.
In this situation  one can check that 
\begin{align}\label{nl:b}
    \cF_c(a)=-P_cB(a, A^{-1}_s B_s(a,a)) =-\frac{1}{24} a^3 \sin(x).
\end{align}

Furthermore, similar to~\cite{BlMo} we assume that the noise is given by a cylindrical Wiener process having the representation
\begin{align}\label{noise:b}
    W(t)= \sum\limits_{k=1}^\infty \alpha_k \beta_k(t) e_k,
\end{align}
 where $(\alpha_k)_{k\geq 1}$ is a sequence of bounded real numbers and $(e_k)_{k\geq 1}$ is the ONB specified above.~The covariance operator $Q$ is Fourier diagonal and satisfies $Q e_k =\alpha^2_k e_k$, which implies that $W_c(t)=\alpha_1\beta_1(t)e_1$.~Plugging this in~\eqref{sum:c} and~\eqref{sum:s} verifies Assumption~\ref{n}. We also note the $A_s^{-1}$ is a trace class operator, which shows that $W_s$ is an $X$-valued process. Moreover, Assumption \ref{o:z} is well-known in the literature~\cite{DaPZ:92} and straightforward to check.\\

In conclusion, we obtain the amplitude equation
\begin{align}\label{ae:b}
    \txtd a = [\frac{\nu}{\varepsilon^2}  a -\frac{1}{12} a^3 ]~\txtd T+\frac{\sigma}{\varepsilon^2} \alpha_1 \txtd \beta_{\varepsilon}(T) 
\end{align}
for which the bifurcation analysis made Section~\ref{sec:bif} applies.

\subsection{Kuramoto-Sivashinsky}

Consider the Kuramoto-Sivashinsky equation
\[
\partial_t u= - \partial_x^4 u + \nu u + |\partial_x u|^2 + \sigma \partial_t W 
\]
on the interval $[0,2\pi]$ subject to periodic boundary conditions.

The operator $A=-\partial_x^4$ satisfies Assumption \ref{a}
as it is Fourier diagonal with $\cN=\text{span}\{1\}$. 
The spaces are $X=L^2([0,2\pi])$ and $Y=H^{5/4}_{\text{per}}([0,2\pi])$.~One can readily check that $B(u,v)=\partial_xu\cdot \partial_xv$ satisfies Assumption \ref{b}.~Moreover,  both Assumptions \ref{n} and \ref{o:z} are easy to check in a similar way as for Burgers equation with the same choice of noise as in~\eqref{noise:b}.~Unfortunately, $\cF_c(a)=0$  leading to the linear amplitude equation 
\begin{align}
    \txtd a = \frac{\nu}{\varepsilon^2}  a~\txtd T +\frac{\sigma}{\varepsilon^2} \alpha_1~ \txtd \beta_{\varepsilon}(T). 
\end{align}
Thus the bifurcation analysis presented in Section~\ref{sec:bif} does not apply in this case, but the Lyapunov exponents $\lambda_T^a$ are easy to calculate.  

\subsection{Rayleigh-B\'enard}\label{sec:rb}
This model consists of a Navier-Stokes equation coupled to a heat equation, it does not fit into our setting, as the 
the operator $A$ is not self-adjoint but it has a compact resolvent. We have a basis of eigenfunctions but this is not orthonormal, in particular $P_c$ is no longer an orthogonal projection with norm one.~Moreover, $\cN$ might not be one-dimensional and thus the bifurcation analysis presented in Section~\ref{sec:bif} does not apply in this case, as the FTLEs for higher dimensional SDEs are in general not easy to compute.

\section{Appendix}
We state some straightforward results which we use in the proof of Theorem~\ref{thm:approximation} and Theorem~\ref{thm:linear}.
\begin{lemma} \label{lem:SDE-contdep} Let $f$ be locally Lipschitz and $a$ bounded. We consider the SDEs on $\cN$
\begin{align*}
    \txtd a =f(a)~\txtd T + \txtd B(T)\\
    \txtd b =f(b)~\txtd T + \txtd R +\txtd B(T),
    \end{align*}
    where $(B(T))_{T\geq 0}$ is a Brownian motion on $\cN$ and $R$ is small. Then for all $K>0$ there exists a constant $C>0$ such that for all $\delta\in(0,1)$ such that $\|a\|\leq K$ and $\|R\|\leq \delta$ we have that 
    \[ \|a(T)-b(T)\|\leq C \delta. \]
    \end{lemma}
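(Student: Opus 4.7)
The plan is to reduce the problem to a standard Gronwall argument on the difference process $d = a - b$. Since both SDEs are driven by the same Brownian motion $B$, the stochastic terms cancel in the integrated form, and we obtain
\[
d(T) = d(0) + \int_0^T \bigl[f(a(S)) - f(b(S))\bigr]~\txtd S - \bigl[R(T) - R(0)\bigr].
\]
The hypothesis $\|R\|\leq \delta$ (taking $R(0)=0$ without loss of generality) bounds the last term by $\delta$, and the initial discrepancy $d(0)$ can be absorbed into the constant $C$ whenever it is of order $\delta$, as in the applications in the paper.

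The first technical issue is that $f$ is only locally Lipschitz, so we cannot apply Gronwall to the unbounded process $b$ directly. To deal with this I would introduce a stopping time $\tau_M := \inf\{T\in[0,T_0]\ :\ \|b(T)\|_\cN \geq K+1\}$. On $[0,\tau_M]$ both $a$ and $b$ are bounded by $K+1$, so there is a Lipschitz constant $L=L(K)$ with $\|f(a(S))-f(b(S))\|\leq L \|d(S)\|$ uniformly on that interval. The integral inequality
\[
\|d(T)\| \leq \|d(0)\| + \delta + L \int_0^T \|d(S)\|~\txtd S
\]
then yields by Gronwall's inequality $\|d(T)\| \leq (\|d(0)\| + \delta)\,e^{L T_0}$ on $[0,\tau_M]$.

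The second step is a standard bootstrap to remove the stopping time. Choosing $\delta_0$ so small that $(\delta_0 + \|d(0)\|)\,e^{LT_0} < 1$, we obtain $\|b(T)\|\leq \|a(T)\|+\|d(T)\| \leq K+1$ on $[0,\tau_M]$, which forces $\tau_M \geq T_0$. Consequently the bound $\|d(T)\|\leq C\delta$ with $C := (1+\|d(0)\|/\delta)\,e^{LT_0}$ holds on the full interval $[0,T_0]$.

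I expect the only mildly delicate point to be the bootstrap: one must pick the threshold $K+1$ and the smallness of $\delta$ consistently, but this is routine. Everything else is a direct computation — the independence of the noise terms from $d$ is what makes the argument clean, and the local Lipschitz hypothesis is enough thanks to the a priori boundedness of $a$ and the bootstrap control on $b$.
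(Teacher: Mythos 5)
Your proof is correct and follows essentially the same route as the paper: cancel the common Brownian motion, absorb $R$ into the drift (the paper does this via the substitution $\tilde b = b-R$), and close the estimate with the local Lipschitz bound and Gronwall's inequality. You are in fact more careful than the paper's own terse proof, which applies the Lipschitz estimate along $b$ without justifying the a priori bound on $b$ that your stopping-time bootstrap supplies; the only caveat, common to both arguments, is that the conclusion requires $\|a(0)-b(0)\|=\cO(\delta)$, which the lemma statement leaves implicit.
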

    \begin{proof}
        This is straightforward.     Setting $\tilde{b}:=b-R$ we get using the local Lipschitz continuity of $f$ together with the boundedness of $a$ and $R$ that
    \end{proof}
    \begin{align*}
        \|a(T)-b(T)\| &\leq \|a(T) -\tilde{b}(T)\| + \|R(T)\|\\
        & \leq   \int_0^T \| (f(a(S)) - f(\tilde{b}(S)+R(S))\|
        ~\txtd S   + \|R(T)\|\\
        & \leq \int_0^T \|a(S)-b(S) \|~\txtd S +  \|R(T)\|.
    \end{align*}
   Gronwall's lemma proves the statement.

\begin{lemma}
Let $a$ in $\cN$ be a solution of SDE 
\[
\txtd a = f(a)~\txtd T+ \cO(1)~\txtd B(T) 
\] 
with $a(0)=\cO(1)$ and $\langle  f(a+b),a\rangle \leq C\|a\|^4-\|b\|^4$ for all $a,b\in \cN$. Then for all $T_0>0$ one has  $a=\cO(1)$ on $[0,T_0]$.
\end{lemma}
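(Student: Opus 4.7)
The approach mirrors the argument used for Lemma~\ref{o:ae}: subtract off the stochastic integral to obtain a pathwise ODE, then exploit the dissipativity assumption via an energy estimate and Gronwall-type reasoning. More precisely, let
\[
M(T):=\int_0^T \cO(1)~\txtd B(S)
\]
denote the noise part of $a$, which by Burkholder--Davis--Gundy and Chebyshev is $\cO(1)$ uniformly on $[0,T_0]$. Setting $\tilde a:=a-M$, the process $\tilde a$ satisfies the pathwise ODE
\[
\partial_T \tilde a = f(\tilde a + M),\qquad \tilde a(0)=a(0)=\cO(1),
\]
so all subsequent analysis is deterministic, performed on each $\omega$ for which $\sup_{[0,T_0]}\|M\|$ is controlled by the $\cO$-constant.

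The key step is to test the ODE with $\tilde a$ and apply the dissipativity hypothesis (which I read in the form consistent with~\eqref{sign}, namely $\langle f(a+b),a\rangle\leq -\delta\|a\|^4+C_\delta\|b\|^4$, since otherwise the $+C\|a\|^4$ term on the right of the stated bound would defeat boundedness):
\[
\tfrac{1}{2}\partial_T \|\tilde a\|^2 = \langle f(\tilde a + M),\tilde a\rangle \leq -\delta\|\tilde a\|^4 + C_\delta \|M\|^4.
\]
This is a scalar differential inequality for $y(T):=\|\tilde a(T)\|^2$ of the form $y'\leq -2\delta y^2 + 2C_\delta \|M\|^4$. Since $\|M\|$ is bounded in the $\cO$-sense on $[0,T_0]$, a standard comparison argument shows $y(T)\leq \max\{y(0),(C_\delta/\delta)^{1/2}\sup_{[0,T_0]}\|M\|^2\}+1$, so $\tilde a =\cO(1)$ on $[0,T_0]$. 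Then $a=\tilde a+M=\cO(1)$ as claimed.

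The main technical point is the probabilistic control of $M$ in the $\cO$-sense of Definition~\ref{order}: for any $p\in(0,1)$ we need $C_p$ with $\P(\sup_{[0,T_0]}\|M\|\leq C_p)\geq p$, which is immediate from BDG plus Chebyshev on the finite horizon $[0,T_0]$, uniformly in the (bounded) diffusion coefficient. The only subtlety worth double-checking is the correct reading of the dissipativity hypothesis; with the version in~\eqref{sign} the estimate above is straightforward, whereas the bound as literally written in the statement ($\leq C\|a\|^4-\|b\|^4$) cannot yield a~priori boundedness and is almost certainly a typographical reversal of the roles of $a$ and $b$ compared to Assumption~\ref{b}.
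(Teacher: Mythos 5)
Your proof takes essentially the same route as the paper's: subtract the noise, test the resulting pathwise ODE with $\tilde a$, and close with the dissipativity estimate plus a Gronwall/comparison step; the paper's own proof is a three-line version of exactly this. Your flag on the sign of the hypothesis is also correct and worth retaining: as literally stated, $\langle f(a+b),a\rangle\le C\|a\|^4-\|b\|^4$ only yields $\tfrac12\partial_T\|\tilde a\|^2\le C\|\tilde a\|^4-\|M\|^4$, a superquadratic inequality that Gronwall cannot close on all of $[0,T_0]$ (the paper's proof reproduces this very inequality and invokes Gronwall anyway), whereas under the intended sign condition of \eqref{sign} your Riccati comparison --- or even the cruder bound $\partial_T\|\tilde a\|^2\le 2C_\delta\sup_{[0,T_0]}\|M\|^4$ obtained by discarding the negative term --- finishes the argument.
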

\begin{proof}
    This follows by subtracting the noise, i.e. $\tilde{a}:=a-B$ leading to 
    \begin{align*}
        \txtd \tilde{a} =f(\tilde{a} +B)~\txtd T,
    \end{align*}
    entailing that
    \begin{align*}
        \frac{1}{2} \partial_T \|\tilde{a}\|^2 = \langle f(\tilde{a}+B), \tilde{a}\rangle~\txtd T \leq C \|\tilde{a}\|^4 - \|B\|^4.
    \end{align*}
 Again, an application of Gronwall's lemma proves the statement. 
\end{proof}


\begin{thebibliography}{10}







\bibitem[AK:84]{AK:84}
L. Arnold and W. Kliemann. 
\newblock Large deviations of linear stochastic differential equations. In H. J. Engelbert and W. Schmidt, editors, Stochastic Differential Systems, pages 115--151, Berlin, Heidelberg, 1987. Springer Berlin Heidelberg.

\bibitem[B:25]{B:25}
P.H. Baxendale.
Moment stability and large deviations for random dynamical systems on non-compact manifolds.
\newblock{\em arXiv:2507.16092}, 2025. 

\bibitem[BB:25]{BB:25}
A. Blessing Neam\c tu and D. Bl\"omker. \newblock Finite-time Lyapunov exponents for SPDEs with fractional noise.
\newblock {\em Journal of Nonlinear Science}, 35(26), 2025.

\bibitem[BBBE:25]{BBBE:25}
A. Blessing Neam\c tu, A. Blumenthal, M. Breden and M. Engel. 
\newblock Detecting random bifurcations via rigorous enclosures of large deviations rate functions.
\newblock {\em Physica D: Nonlinear Phenomena}, 476:134617, 2025.

\bibitem[Bl:07]{Bl:07}
D.~Bl\"omker.
\newblock Amplitude equations for stochastic partial differential equations. 
Interdisciplinary Mathematical Sciences 3. Hackensack, NJ: World Scientific (ISBN 978-981-270-637-9/hbk; 978-981-277-060-8/ebook). x, 126 p. (2007).

\bibitem[BFu:20]{Fu}
D.~Bl\"omker and H.~Fu.
\newblock The impact of multiplicative noise in SPDEs close to bifurcation via amplitude equations.
\newblock {\em Nonlinearity}, 33 3905, 2020.

\bibitem[BlHa:04]{BlHa:04}
D.~Bl\"omker and M.~Hairer.
\newblock Multiscale expansion of invariant measures for SPDEs. 
\newblock {\em Comm.~Math.~Phys.}, 251:515--555, 2004.


\bibitem[BlHaPa:15]{BHP}
	D.~Bl\"omker, M.~Hairer and G.~Pavliotis.
	\newblock Modulation Equations: Stochastic Bifurcation in Large Domains.
	\newblock {\em Comm.~Math.~Phys.}, 258(2):479--512, 2015.

\bibitem[BlJ:13]{BlJ}
D.~Bl\"omker and A Jentzen.
\newblock Galerkin approximations for the stochastic Burgers equation.
\newblock{\em SIAM Journal on Numerical Analysis}, 51(1):694--715, 2013.

\bibitem[MoBlKl:14]{MBKa}
W. W. Mohammed, D. Bl\"omker, K. Klepel,
Multi-scale analysis of SPDEs with degenerate additive noise,
Journal of Evolution Equation, 
Volume14, pages 273--298, (2014)

\bibitem[BlMo:09]{BlMo}
D.~Bl\"omker and W.~W.~Mohammed
Amplitude Equation for SPDEs with Quadratic Non-Linearities.
\newblock {\em Electronic Journal of Probability} 14:2527--2550, 2009. 


 \bibitem[BlHaPa:07]{BlHaPa:07}
D.~Bl\"omker, M.~Hairer and G.~Pavliotis.
\newblock Multiscale analysis for stochastic partial differential equations with quadratic nonlinearities.
{\em Nonlinearity} 20:1721, 2007.
	
	\bibitem[BlNe:22]{BlNe:22}
	D. Bl\"omker and A. Neam\c tu. Amplitude equations for SPDEs driven by fractional additive noise with small Hurst parameter. {\em Stoch.~Dyn.}, 22(03):2240013, 2022.
	

\bibitem[BlNe:23]{BlNe:23}
D.~Bl\"omker and A.~Neam\c tu.
Bifurcation theory for SPDEs: finite-time Lyapunov exponents and amplitude equations. {\em SIAM J.~Appl.~Dyn.Syst.}, 22(3):2150--2179, 2023. 


\bibitem[BlEnNe:21]{BlEnNe:21}
	A.~Blumenthal, M.~Engel and A.~Neam\c tu.
	\newblock On the pitchfork bifurcation for the Chafee-Infante equation with additive noise. {\em Probab.~Theory~Rel.~Fields}, 187:603--627, 2023. 
	


\bibitem[CDLR17]{CDLR17}
	M. Callaway, T. S. Doan, J. S. W. Lamb, and M. Rasmussen. The dichotomy spectrum for
	random dynamical systems and pitchfork bifurcations with additive noise. {\em Ann. Inst. Henri
		Poincar\'e Probab. Stat}., 53(4):1548--1574, 2017.


\bibitem[DaPZ:92]{DaPZ:92}
G. Da Prato and J. Zabczyk. 
\newblock {\em Stochastic equations in infinite dimensions}. Cambridge University Press, 1992.


\bibitem[DELR:18]{DELR:18}
T.S.~Doan, M.~Engel, J.S.W Lamb and M.~Rasmussen.
\newblock Hopf bifurcation with additive noise.
\newblock{\em Nonlinearity} 31(10):4567, 2018. 

\bibitem[GLS:20]{GLS:20}
B. Gess, W. Liu and A. Schenke.
\newblock Random attractors for locally monotone stochastic partial differential equations.
\newblock {\em J.~Differential Equat.}, 269: 3414-3455, 2020.


\bibitem[HaWe:13]{HaWe}
M. Hairer and H. Weber.
\newblock Rough Burgers-like equations with multiplicative noise.
\newblock{\em Probab. Theory Rel. Fields }, 15:71--126, 2013.

  \bibitem[Henry:81]{Henry81}
            D. Henry.
            \newblock {\em Geometric theory of semilinear parabolic equations}. Springer, Berlin Heidelberg, 1981.


 





\bibitem[Rou:13]{Rou:13} Roub\'{i}\v{c}ek, Tom\'{a}\v{s}.
Nonlinear partial differential equations with applications. 2nd ed.
ISNM. International Series of Numerical Mathematics 153. Basel: Birkh\"auser xx, 476 p. (2013).


\bibitem[Te:97]{Te:97} Temam, Roger.
Infinite-dimensional dynamical systems in mechanics and physics. 2nd ed.
Applied Mathematical Sciences. 68. New York, NY: Springer. xxi, (1997). 


 
 \end{thebibliography}
\end{document}